\providecommand{\U}[1]{\protect\rule{.1in}{.1in}}
\newtheorem{theorem}{Theorem}
\newtheorem{corollary}[theorem]{Corollary}
\newtheorem{example}[theorem]{Example}
\newtheorem{lemma}[theorem]{Lemma}
\newtheorem{proposition}[theorem]{Proposition}
\newtheorem{remark}[theorem]{Remark}
\newenvironment{proof}[1][Proof]{\noindent\textbf{#1.} }{$\hfill\Box$}
\begin{document}

\title{\noindent Solutions of the Cheeger problem via torsion functions {\thanks{2000
Mathematics Subject Classification: }}}
\author{\textbf{{\large H. Bueno and G. Ercole}}\thanks{The authors were supported in
part by FAPEMIG and CNPq, Brazil.}\\\textit{{\small Departamento de Matem\'{a}tica}}\\\textit{{\small Universidade Federal de Minas Gerais}}\\\textit{{\small Belo Horizonte, Minas Gerais, 30.123.970, Brazil}}\\\textit{{\small e-mail: hamilton@mat.ufmg.br}}, \thinspace
\textit{{\small grey@mat.ufmg.br}}}
\maketitle

\begin{abstract}
The Cheeger problem for a bounded domain $\Omega\subset\mathbb{R}^{N}$, $N>1$
consists in minimizing the quotients $|\partial E|/|E|$ among all smooth
subdomains $E\subset\Omega$ and the Cheeger constant $h(\Omega)$ is the
minimum of these quotients. Let $\phi_{p}\in C^{1,\alpha}\left( \overline
{\Omega}\right) $ be the $p$-torsion function, that is, the solution of
torsional creep problem $-\Delta_{p}\phi_{p}=1$ in $\Omega$, $\phi_{p}=0$ on
$\partial\Omega$, where $\Delta_{p}u:=\operatorname{div}(|\nabla
u|^{p-2}\nabla u)$ is the $p$-Laplacian operator, $p>1$. The paper emphasizes
the connection between these problems. We prove that $\lim_{p\rightarrow1^{+}%
}(\|\phi_{p}\|_{L^{\infty}(\Omega)})^{1-p}=h(\Omega)=\lim_{p\rightarrow1^{+}%
}(\|\phi_{p}\|_{L^{1}(\Omega)})^{1-p}$. Moreover, we deduce the relation
$\lim_{p\to1^{+}}\|\phi_{p}\|_{L^{1}(\Omega)}\geq C_{N}\lim_{p\to1^{+}}%
\|\phi_{p}\|_{L^{\infty}(\Omega)}$ where $C_{N}$ is a constant depending only
of $N$ and $h(\Omega)$, explicitely given in the paper. An eigenfunction $u\in
BV(\Omega)\cap L^{\infty}(\Omega)$ of the Dirichlet $1$-Laplacian is obtained
as the strong $L^{1}$ limit, as $p\rightarrow1^{+}$, of a subsequence of the
family $\{\phi_{p}/\|\phi_{p}\|_{L^{1}(\Omega)}\}_{p>1}$. Almost all $t$-level
sets $E_{t}$ of $u$ are Cheeger sets and our estimates of $u$ on the Cheeger
set $|E_{0}|$ yield $|B_{1}|\,h(B_{1})^{N}\leq|E_{0}|h(\Omega)^{N}$, where
$B_{1}$ is the unit ball in $\mathbb{R}^{N}$. For $\Omega$ convex we obtain
$u=|E_{0}|^{-1}\chi_{E_{0}}$.

\end{abstract}

\section{Introduction}

In this paper we consider the minimization problem
\begin{equation}
h(\Omega)=\min_{E\subset\Omega}\frac{|\partial E|}{|E|},\label{cheginf}%
\end{equation}
known as the \textit{Cheeger problem}. Here $\Omega\subset\mathbb{R}^{N}$
($N>1$) is smooth and bounded domain, the quotients $|\partial E|/|E|$ are
evaluated among all smooth subdomains $E\subset\Omega$ and the quantities
$|\partial E|$ and $|E|$ denote, respectively, the $(N-1)$-dimensional
Lebesgue perimeter of $\partial E$ and the $N$-dimensional Lebesgue volume of
$E$.

The value $h(\Omega)$ is known as the \textit{Cheeger constant} of $\Omega$
and a corresponding minimizing subdomain $E$ is called a \textit{Cheeger set}
of $\Omega$.

Cheeger sets have importance in the modeling of landslides, see
\cite{Hild,Iunescu}, or in fracture mechanics, see \cite{Keller}.

On its turn, the Cheeger constant of $\Omega$ itself offers a lower bound (see
\cite{Grieser, Lefton}) for the first eigenvalue $\lambda_{p}(\Omega)$ of the
$p$-Laplacian operator $\Delta_{p} u:=\operatorname{div}\left( |\nabla
u|^{p-2}\nabla u\right) $, $p>1$, with homogeneous Dirichlet data, that is,
$\lambda_{p}(\Omega)$ is the only positive real number that satisfies
\begin{equation}
\left\{
\begin{array}
[c]{rcll}%
-\Delta_{p}u_{p} & = & \lambda_{p}u_{p}^{p-1}, & \text{in }\Omega\\
u_{p} & = & 0, & \text{on }\partial\Omega
\end{array}
\right. \label{eigenproblem}%
\end{equation}
for some positive function $u_{p}\in W_{0}^{1}(\Omega)\setminus\{0\}$.

It is well-known that
\begin{equation}
\lambda_{p}(\Omega)=\frac{\int_{\Omega}|\nabla u_{p}|^{p}\,dx}{\int_{\Omega
}u_{p}^{p}\,dx}=\inf\left\{ \frac{\int_{\Omega}|\nabla u|^{p}\,dx}%
{\int_{\Omega}|u|^{p}\,dx}\,:\,u\in W_{0}^{1}(\Omega)\setminus\{0\}\right\}
.\label{lambdamin}%
\end{equation}

A strong connection between the solutions of the eigenvalue problem
(\ref{eigenproblem}) and of the Cheeger problem (\ref{cheginf}) became evident
from the remarkable work \cite{Kaw2} by Kawohl and Fridman. In that paper they
proved that
\begin{equation}
h(\Omega)=\lim_{p\rightarrow1^{+}}\lambda_{p}(\Omega)\label{kaw}%
\end{equation}
and that $L^{\infty}$-normalized family $\left\{ u_{p}\right\} $ of positive
eigenfunctions converges in $L^{1}$ (up to subsequences), as $p\rightarrow
1^{+}$, to a bounded function $u$ whose level sets $E_{t}=\left\{ x\in
\Omega\,:\,u(x)>t\right\} $ are Cheeger sets for almost all $0\leq t\leq1$.
Moreover, if $\Omega$ is convex they argued that $E_{t}=E_{0}$ for almost all
$0<t\leq1$ and $u=c\chi_{E_{0}}$ ($\chi_{A}$ denotes the characteristic
function of $A$). We remark that Cheeger sets are unique if $\Omega$ is convex
(see \cite{AlterCaselles, Caselles, Stred}).

The function $u$ built in \cite{Kaw2} solves the eigenvalue problem for the
$1$-Laplacian $\Delta_{1}=\operatorname{div}(\nabla u/|\nabla u|)$:
\begin{equation}
\left\{
\begin{array}
[c]{rcll}%
-\Delta_{1} & = & h(\Omega), & \text{in }\Omega\\
u & = & 0, & \text{on }\partial\Omega
\end{array}
\right. \label{1lap}%
\end{equation}
formally deduced by taking $p=1$ in (\ref{eigenproblem}) and keeping
(\ref{kaw}) in mind. Apparently inspired by the variational characterization
of $\lambda_{p}(\Omega)$ in (\ref{lambdamin}), Kawohl and Fridman \cite{Kaw2}
have reformulated (\ref{1lap}) as a minimizing problem of quotients in the
$BV(\Omega)$ space. Then, after verifying that $\left\{ u_{p}\right\} $ is a
bounded family in $BV(\Omega)$ and applying properties of this space, they
proved the existence of a solution $u\in BV(\Omega)$ as mentioned above.
Moreover, in \cite{Kaw2} the authors clarified the equivalence between the
problems (\ref{eigenproblem}) and (\ref{cheginf}) as well as presented some
examples and properties of the Cheeger sets related to uniqueness, regularity
and convexity.

A $BV$-formulation had already appeared in \cite{Kaw1} for the operator
$\Delta_{1}$, where some free boundary problems were introduced and
interrelated through a minimization problem for a certain energy functional
$J_{1}$ that generalizes, for $p=1$, the \textit{torsional creep} problem
\begin{equation}
\left\{
\begin{array}
[c]{rcll}%
-\Delta_{p}\phi_{p} & = & 1, & \text{in }\Omega\\
\phi_{p} & = & 0, & \text{on }\partial\Omega.
\end{array}
\right. \label{p-tors}%
\end{equation}
However, the existence of Cheeger sets and the obtention of the Cheeger
constant were not treated in that paper.

Since \cite{Kaw1} and \cite{Kaw2} the variational treatment of problems
involving $\Delta_{1}$ in the $BV(\Omega)$ space has been naturally adopted in
the literature \cite{AlterCaselles, Butazzo1, Cicalese, Hild, Iunescu,KawNov}.
We refer to \cite{Carlier} for a complete treatment of a more general Cheeger problem.

Our goal in this paper is to emphasize the strong connection between solutions
of the Cheeger problem and the family $\left\{ \phi_{p}\right\} $ of the
$p$-\textit{torsion functions}, that is, solutions of the torsional creep
problem (\ref{p-tors}).

The major part of our approach connects (\ref{p-tors}) directly to
(\ref{cheginf}) and some relations can be used as alternative estimates for
$\lambda_{p}(\Omega)$ and $h(\Omega)$.

We prove that
\begin{equation}
\lim_{p\rightarrow1^{+}}\frac{1}{\|\phi_{p}\|_{\infty}^{p-1}}=h(\Omega
)=\lim_{p\rightarrow1^{+}}\frac{1}{\|\phi_{p}\|_{1}^{p-1}}\label{characheeg}%
\end{equation}
where $\|\phi_{p}\|_{\infty}$ and $\|\phi_{p}\|_{1}$ denote, respectively, the
$L^{\infty}$ norm and the $L^{p}$ norm of the $p$-torsion function $\phi_{p}$.

We also deduce a Cheeger inequality involving $\Vert\phi_{p}\Vert_{\infty}$
and $\Vert\phi_{p}\Vert_{1}$:
\begin{equation}
|B_{1}|\left(  \frac{h(B_{1})}{h(\Omega)}\right)  ^{N}=\omega_{N}\left(
\frac{N}{h(\Omega)}\right)  ^{N}\leq\liminf_{p\rightarrow1^{+}}\frac{\Vert
\phi_{p}\Vert_{1}}{\Vert\phi_{p}\Vert_{\infty}}\label{bounds}%
\end{equation}
where $\omega_{N}=|B_{1}|$ is the volume of the unit ball $B_{1}%
\subset\mathbb{R}^{N}$.

By exploring (\ref{characheeg}) and standard properties of $BV$-functions we
obtain, as in \cite{Kaw2} or \cite[Section 2]{Butazzo1}, the $L^{1}$
convergence (up to subsequences), when $p\rightarrow1^{+}$, of the family
$\left\{ \frac{\phi_{p}}{\|\phi_{p}\|_{1}}\right\} _{p\geq1}$ for a solution
$u\in BV(\Omega)\cap L^{\infty}(\Omega)$ of (\ref{1lap}). In view of general
properties of solutions of (\ref{1lap}) (see \cite{Kaw2} or \cite{Carlier})
the $t$-level sets $E_{t}$ of this function are Cheeger set for almost $0\leq
t\leq\|u\|_{\infty}$ and, moreover, if $\Omega$ is convex, $E_{t}=E_{0}$ for
almost $0\leq t\leq\|u\|_{\infty}$ and $u=\frac{\chi_{E_{0}}}{|E_{0}|}$.

As consequence of the estimate (\ref{bounds}) the function limit $u$
satisfies
\[
0\leq u\leq\omega_{N}^{-1}\left( \dfrac{h(\Omega)}{N}\right) ^{N}\text{ \ in
\ }\Omega
\]
implying the following estimate for the Cheeger set $E_{0}$:
\[
|B_{1}|h(B_{1})^{N}\leq|E_{0}|h(\Omega)^{N}.
\]
This estimate is optimal when $\Omega$ is a ball, the known case where
$\Omega$ is its Cheeger set itself.

Alternatively, the same convergence result can be proved for the family
$\left\{ \frac{\phi_{p}}{\|\phi_{p}\|_{\infty}}\right\} _{p\geq1}$.

To obtain the characterizations of $h(\Omega)$ in (\ref{characheeg}) we
explore some properties of the energy functional $J_{p}$ associated to the
torsional creep problem (\ref{p-tors}) and deduce an estimate relating
$h(\Omega)$ and $\|\phi_{p}\|_{1}$, see equation (\ref{hright}). The first
characterization in (\ref{characheeg}) was possible thanks to the estimate
(\ref{bounds}) that we prove inspired by the arguments of \cite[Chap. 2 Sect.
5]{Ladyz} (see also \cite[Theor. 2]{Bandle}). However, in order to handle some
limits as $t\rightarrow1^{+}$ we had to develop some auxiliary estimates with
explicit $p$-dependence.

We also provide a simpler proof of (\ref{characheeg}), if $\Omega$ is convex.
For this we use Schwarz symmetrization and explore the concavity of $\phi
_{p}^{1-\frac{1}{p}}$ (see \cite{Saka}), which, taking into account the
convexity of $\Omega$, can be used to justify the well-known convexity of the
unique Cheeger set.

The paper is organized as follows: in Section \ref{CCC} we prove
(\ref{characheeg}) and the given estimates of the Cheeger constant $h(\Omega
)$. In Section \ref{CD} we consider the special case of a convex domain
$\Omega$, where an alternative proof of (\ref{characheeg}) is obtained and
also some estimates of the Cheeger constant in terms of Beta and Gamma
functions. Part of the final Section \ref{CS} is written for the convenience
of the reader and reproduces the current variational approach in the $BV$
space for the Cheeger problem (\ref{cheginf}) and some of the main results of
this theory, following \cite{Carlier}. Then, we apply this approach to obtain
Cheeger sets as level sets of a solution $u\in BV(\Omega)\cap L^{\infty
}(\Omega)$ of (\ref{1lap}) and state the estimate $|B_{1}|h(B_{1})^{N}%
\leq|E_{0}|h(\Omega)^{N}$ for the Cheeger set $E_{0}$. We end the paper by
illustrating this estimate for a plane square.

\section{Characterizations of the Cheeger constant}

\label{CCC} In this section we prove that
\[
\lim_{p\rightarrow1^{+}}\frac{1}{\|\phi_{p}\|_{\infty}^{p-1}}=h(\Omega
)=\lim_{p\rightarrow1^{+}}\frac{1}{\|\phi_{p}\|_{1}^{p-1}}
\]
where $\phi_{p}$ is the $p$-torsion function of $\Omega$, that is, the
solution of (\ref{p-tors}).

It is easy to verify that the $p$-torsion function of a ball $B_{R}$ of radius
$R$ with center at the origin is the radially symmetric function
\begin{equation}
\Phi_{p}(r)=\frac{p-1}{p}N^{-\frac{1}{p-1}}\left( R^{\frac{p}{p-1}}%
-r^{\frac{p}{p-1}}\right) ,\text{ \ }r=|x|\leq R.\label{Rtorsion}%
\end{equation}

Positivity, boundedness and $C^{1,\beta}$-regularity follow from this
expression. Hence, as consequence of the comparison principle and regularity
theorems (see \cite{DiBenedeto, Lieberman, Tolks}) these properties are easily
transferred to the $p$-torsion function of a general bounded domain $\Omega$.
Thus, one has $\phi_{p}>0$ in $\Omega$,
\[
\|\phi_{p}\|_{\infty}\leq\frac{p-1}{p}N^{-\frac{1}{p-1}}R^{\frac{p}{p-1}}
\]
for any $R>0$ such that $\Omega\subset B_{R}$ and $\phi_{p}\in C^{1,\beta
}\left( \overline{\Omega}\right) \cap W_{0}^{1,p}(\Omega)$ for some
$0<\beta<1$.

It follows from (\ref{p-tors}) that
\begin{equation}
\int_{\Omega}|\nabla\phi_{p}|^{p-2}\nabla\phi_{p}\cdot\nabla v\,dx=\int
_{\Omega}v\,dx\text{ \ for all }v\in W_{0}^{1,p}(\Omega)\label{aux1}%
\end{equation}
which yields, by taking $v=\phi_{p}$,
\begin{equation}
\int_{\Omega}|\nabla\phi_{p}|^{p}\,dx=\int_{\Omega}\phi_{p}%
\,dx.\label{torsgrad}%
\end{equation}

Moreover, a standard variational argument shows that ${\phi}_{p}$ minimizes
the strictly convex energy functional $J_{p}\colon W_{0}^{1,p}(\Omega
)\rightarrow\mathbb{R}$ given by
\begin{equation}
J_{p}(u)=\frac{1}{p}\int_{\Omega}|\nabla u|^{p}\,dx-\int_{\Omega
}u\,dx.\label{Jp}%
\end{equation}

\begin{lemma}
\label{Jplemma} Let $\Omega\subset\mathbb{R}^{N}$ be a bounded, smooth domain.
If $\varphi\in W_{0}^{1,p}(\Omega)$ is nonnegative in $\Omega$ and such that
$\int_{\Omega}|\nabla\varphi|\,dx>0$, then
\begin{equation}
\liminf_{p\rightarrow1^{+}}\|\phi_{p}\|_{1}^{p-1}\geq\frac{\int_{\Omega
}\varphi\,dx}{\int_{\Omega}|\nabla\varphi|\,dx},\label{lemint}%
\end{equation}
where $\phi_{p}$ is the $p$-torsion function of $\Omega$ and $\|\cdot\|_{1}$
stands for the $L^{1}$-norm.
\end{lemma}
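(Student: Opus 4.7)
The plan is to exploit the variational characterization of $\phi_p$ as the minimizer of the strictly convex energy functional $J_p$ defined in \eqref{Jp}. First, I would use the identity \eqref{torsgrad} to rewrite the minimum value of $J_p$ in closed form: since $\int_{\Omega}|\nabla\phi_p|^p\,dx = \int_{\Omega}\phi_p\,dx$, one obtains
\[
J_p(\phi_p) = -\frac{p-1}{p}\int_{\Omega}\phi_p\,dx = -\frac{p-1}{p}\|\phi_p\|_1.
\]
This turns the inequality $J_p(\phi_p)\le J_p(v)$ for every admissible $v$ into a lower bound on $\|\phi_p\|_1$.

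Next, I would choose $v=t\varphi$ with $t>0$ a free parameter, and optimize in $t$. A short computation gives
\[
J_p(t\varphi) = \frac{t^p}{p}\int_{\Omega}|\nabla\varphi|^p\,dx - t\int_{\Omega}\varphi\,dx,
\]
and, since $\varphi\ge 0$ with $\int_{\Omega}|\nabla\varphi|\,dx>0$ forces $\int_{\Omega}\varphi\,dx>0$, this one-variable function is minimized at
\[
t_{\ast} = \left(\frac{\int_{\Omega}\varphi\,dx}{\int_{\Omega}|\nabla\varphi|^p\,dx}\right)^{\!\!1/(p-1)},
\]
yielding $J_p(t_{\ast}\varphi) = -\frac{p-1}{p}\,t_{\ast}\int_{\Omega}\varphi\,dx$. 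Combining with the formula for $J_p(\phi_p)$ gives the clean intermediate bound
\[
\|\phi_p\|_1^{\,p-1} \;\ge\; \frac{\bigl(\int_{\Omega}\varphi\,dx\bigr)^{p}}{\int_{\Omega}|\nabla\varphi|^p\,dx}.
\]

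Finally, I would pass to the limit $p\to 1^+$. The numerator $\bigl(\int_{\Omega}\varphi\,dx\bigr)^p$ obviously tends to $\int_{\Omega}\varphi\,dx$. For the denominator one needs $\int_{\Omega}|\nabla\varphi|^p\,dx\to \int_{\Omega}|\nabla\varphi|\,dx$ as $p\to 1^+$, which is the only mildly delicate point: it is immediate by dominated convergence whenever $\varphi$ is Lipschitz (or more generally when $|\nabla\varphi|\in L^{\infty}$), and otherwise one may approximate $\varphi$ by a Lipschitz nonnegative function in $W_0^{1,1}(\Omega)$ whose ratio $\int\varphi/\int|\nabla\varphi|$ is arbitrarily close to that of $\varphi$. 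This density/approximation argument is the main obstacle in the proof, but it is handled by a routine cutoff-and-mollification procedure; once it is in place, taking the $\liminf$ of both sides yields \eqref{lemint}.
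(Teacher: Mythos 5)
Your proposal is correct and follows essentially the same route as the paper: both exploit the minimality of $\phi_{p}$ for $J_{p}$, the identity $J_{p}(\phi_{p})=-\frac{p-1}{p}\|\phi_{p}\|_{1}$, and a test function of the form $t\varphi$, then let $p\rightarrow1^{+}$. The only (cosmetic) difference is that you optimize exactly in $t$, obtaining the clean bound $\|\phi_{p}\|_{1}^{p-1}\geq\bigl(\int_{\Omega}\varphi\,dx\bigr)^{p}/\int_{\Omega}|\nabla\varphi|^{p}\,dx$ directly, whereas the paper makes an $\epsilon$-perturbed choice of the scaling constant $c_{p}$ and removes $\epsilon$ at the end; your handling of $\int_{\Omega}|\nabla\varphi|^{p}\,dx\rightarrow\int_{\Omega}|\nabla\varphi|\,dx$ (dominated convergence, or Lipschitz $\varphi$ as in the paper's application) is the same point the paper passes over silently.
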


\begin{proof}
Since $\phi_{p}$ is a minimizer of the functional energy $J_{p}$ in
$W_{0}^{1,p}(\Omega)$ it follows from (\ref{aux1}) and (\ref{Jp}) that for all
$u\in W_{0}^{1,p}(\Omega)$ one has
\[
\left( \frac{1}{p}-1\right) \int_{\Omega}\phi_{p}\,dx=J_{p}(\phi_{p})\leq
\frac{1}{p}\int_{\Omega}|\nabla u|^{p}\,dx-\int_{\Omega}u\,dx.
\]
Thus,
\begin{equation}
\|\phi_{p}\|_{1}\geq\frac{1}{p-1}\left( p\int_{\Omega}u\,dx-\int_{\Omega
}|\nabla u|^{p}\,dx\right) \,dx\text{ \ for all }u\in W_{0}^{1,p}%
(\Omega).\label{nconv2}%
\end{equation}

Now let $\varphi\in W_{0}^{1,p}(\Omega)$ be nonnegative in $\Omega$ and such
that $\int_{\Omega}|\nabla\varphi|\,dx>0$. For a fixed $\epsilon$,
$0<\epsilon<1$, let $c_{p}$ be the positive constant such that
\[
p\int_{\Omega}\varphi\,dx-c_{p}^{p-1}\int_{\Omega}|\nabla\varphi
|^{p}\,dx=\epsilon\int_{\Omega}\varphi\,dx,
\]
that is
\[
c_{p}^{p-1}=(p-\epsilon)\frac{\int_{\Omega}\varphi\,dx}{\int_{\Omega}%
|\nabla\varphi|^{p}\,dx}.
\]

It follows from (\ref{nconv2}) with $u=c_{p}\varphi$ that
\[
\|\phi_{p}\|_{1}\geq\frac{c_{p}}{p-1}\left( p\int_{\Omega}\varphi
\,dx-c_{p}^{p-1}\int_{\Omega}|\nabla\varphi|^{p}\,dx\right) =\frac{\epsilon
c_{p}}{p-1}\int_{\Omega}\varphi\,dx.
\]

Therefore,
\begin{align*}
\liminf_{p\rightarrow1^{+}}\|\phi_{p}\|_{1}^{p-1} & \geq\lim_{p\rightarrow
1^{+}}c_{p}^{p-1}\left( \frac{1}{p-1}\right) ^{p-1}\left( \epsilon\int
_{\Omega}\varphi\,dx\right) ^{p-1}\\
& =\lim_{p\rightarrow1^{+}}c_{p}^{p-1}=(1-\epsilon)\frac{\int_{\Omega}%
\varphi\,dx}{\int_{\Omega}|\nabla\varphi|\,dx}.
\end{align*}
Making $\epsilon\rightarrow0$, (\ref{lemint}) follows.
\end{proof}

\begin{theorem}
\label{Rcheeger} Let $\Omega\subset\mathbb{R}^{N}$ be a bounded, smooth domain
and $\phi_{p}$ its $p$-torsion function. Then
\begin{equation}
h(\Omega)\leq\left( \frac{|\Omega|}{\|\phi_{p}\|_{1}}\right) ^{\frac{p-1}{p}%
}\label{hright}%
\end{equation}
and
\begin{equation}
\lim_{p\rightarrow1^{+}}\frac{1}{\|\phi_{p}\|_{1}^{p-1}}=h(\Omega
).\label{limhright}%
\end{equation}

\end{theorem}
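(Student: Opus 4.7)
The plan is to obtain (\ref{hright}) from two classical ingredients, namely H\"older's inequality and the coarea form of the Cheeger inequality, and then to sandwich $\|\phi_p\|_1^{p-1}$ between (\ref{hright}) and Lemma \ref{Jplemma} to extract (\ref{limhright}).

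For (\ref{hright}), I would apply H\"older's inequality with conjugate exponents $p$ and $p/(p-1)$ to obtain $\int_\Omega |\nabla \phi_p|\,dx \leq |\Omega|^{(p-1)/p}\left(\int_\Omega |\nabla \phi_p|^p\,dx\right)^{1/p}$, and use the torsional identity (\ref{torsgrad}) to rewrite the right-hand side as $|\Omega|^{(p-1)/p}\|\phi_p\|_1^{1/p}$. The matching lower bound for $\int_\Omega|\nabla \phi_p|\,dx$ comes from the coarea formula applied to the nonnegative function $\phi_p$: since almost every superlevel set $\{\phi_p > t\}$ is an admissible competitor in (\ref{cheginf}), one has $\int_\Omega |\nabla \phi_p|\,dx = \int_0^\infty |\partial \{\phi_p > t\}|\,dt \geq h(\Omega)\int_0^\infty |\{\phi_p > t\}|\,dt = h(\Omega)\,\|\phi_p\|_1$. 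Dividing these two inequalities yields (\ref{hright}).

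For the limit (\ref{limhright}) I would treat the two one-sided inequalities separately. Raising (\ref{hright}) to the $p$-th power gives $\|\phi_p\|_1^{p-1} \leq |\Omega|^{p-1}/h(\Omega)^p$, which on letting $p \to 1^+$ yields $\limsup_{p\to 1^+} \|\phi_p\|_1^{p-1} \leq 1/h(\Omega)$, equivalently $\liminf_{p\to 1^+}1/\|\phi_p\|_1^{p-1}\geq h(\Omega)$. The opposite $\limsup$ bound follows from Lemma \ref{Jplemma}: for any fixed nonnegative $\varphi \in C_c^\infty(\Omega)$ with $\int_\Omega|\nabla\varphi|\,dx>0$, the lemma gives $\liminf_{p\to 1^+}\|\phi_p\|_1^{p-1} \geq \int_\Omega\varphi\,dx / \int_\Omega|\nabla\varphi|\,dx$, hence $\limsup_{p\to 1^+} 1/\|\phi_p\|_1^{p-1} \leq \int_\Omega|\nabla\varphi|\,dx/\int_\Omega\varphi\,dx$. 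Taking the infimum of the right-hand side over admissible $\varphi$ and using the standard identity $\inf_{\varphi}\int_\Omega|\nabla\varphi|\,dx/\int_\Omega\varphi\,dx = h(\Omega)$ closes the argument.

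The main obstacle, such as it is, lies in this last identity: that $h(\Omega)$ coincides with the infimum of the Rayleigh-type ratio over nonnegative smooth compactly supported test functions. This is classical and follows by approximating $\chi_E$ for a Cheeger set $E$ via mollification of the indicator of a slightly shrunk smooth subdomain $E_\varepsilon \subset\subset \Omega$ with $|\partial E_\varepsilon|/|E_\varepsilon|$ arbitrarily close to $h(\Omega)$. Once it is granted, the rest of the argument is a direct arithmetic assembly of (\ref{hright}) and Lemma \ref{Jplemma}.
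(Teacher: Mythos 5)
Your proposal is correct and follows essentially the same route as the paper: H\"older's inequality combined with the coarea/Cavalieri lower bound $\int_\Omega|\nabla\phi_p|\,dx\geq h(\Omega)\|\phi_p\|_1$ gives (\ref{hright}), and Lemma \ref{Jplemma} applied to test functions approximating characteristic functions of near-optimal subdomains gives the matching bound on the $\limsup$. The only difference is cosmetic: where you invoke as classical the identity $\inf_\varphi\int_\Omega|\nabla\varphi|\,dx/\int_\Omega\varphi\,dx=h(\Omega)$, the paper carries out the approximation explicitly (Lipschitz cutoffs on an $\varepsilon$-layer, plus a scaling argument for subdomains touching $\partial\Omega$), which is exactly the mollification step you sketch.
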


\begin{proof}
The estimate (\ref{hright}) follows from Cavalieri's principle and coarea
formula applied to the $p$-torsion function $\phi_{p}$. In fact, since
$\phi_{p}\in C^{1,\beta}\left( \overline{\Omega}\right) $ we have
\[
\int_{\Omega}\phi_{p}\,dx=\int_{0}^{\|\phi_{p}\|_{\infty}}|A_{t}|\,dt
\]
and
\[
\int_{\Omega}|\nabla\phi_{p}|\,dx=\int_{0}^{\|\phi_{p}\|_{\infty}}|\partial
A_{t}|\,dt
\]
where
\[
A_{t}=\left\{ x\in\Omega\,:\,\phi_{p}(x)>t\right\} .
\]

Therefore, since $h(\Omega)\leq\dfrac{|\partial A_{t}|}{|A_{t}|}$, we have
\[
\int_{\Omega}|\nabla\phi_{p}|\,dx=\int_{0}^{\|\phi_{p}\|_{\infty}}|\partial
A_{t}|\,dt\geq\int_{0}^{\|\phi_{p}\|_{\infty}}h(\Omega)|A_{t}|\,dt=h(\Omega
)\int_{\Omega}\phi_{p}\,dx.
\]
Thus, H\"{o}lder inequality and (\ref{torsgrad}) yield that
\begin{align*}
h(\Omega)  & \leq\frac{\int_{\Omega}|\nabla\phi_{p}|\,dx}{\int_{\Omega}%
\phi_{p}\,dx}\\
& \leq\frac{\left( \int_{\Omega}|\nabla\phi_{p}|^{p}\,dx\right) ^{\frac{1}{p}%
}|\Omega|^{1-\frac{1}{p}}}{\int_{\Omega}\phi_{p}\,dx}=\frac{\left(
\int_{\Omega}\phi_{p}\,dx\right) ^{\frac{1}{p}}|\Omega|^{1-\frac{1}{p}}}%
{\int_{\Omega}\phi_{p}\,dx}=\left( \frac{|\Omega|}{\|\phi_{p}\|_{1}}\right)
^{\frac{p-1}{p}},
\end{align*}
which is (\ref{hright}). It follows then
\[
h(\Omega)\leq\liminf_{p\rightarrow1^{+}}\left( \frac{|\Omega|}{\|\phi
_{p}\|_{1}}\right) ^{\frac{p-1}{p}}=\liminf_{p\rightarrow1^{+}}\frac{1}%
{\|\phi_{p}\|_{1}^{p-1}}.
\]

To complete the proof, we will firstly prove that $\limsup
\limits_{p\rightarrow1^{+}}\frac{1}{\|\phi_{p}\|_{1}^{p-1}}$ is a lower bound
to the quotients $|\partial E|/|E|$ formed by smooth subdomains $E\subset
\subset\Omega$ whose boundary $\partial E$ does not intercept $\partial\Omega$.

Let $E$ such a domain. We approximate the characteristic function of $E$ by a
suitable nonnegative function $\varphi_{\varepsilon}\in W_{0}^{1,p}(\Omega)$
such that $\varphi_{\varepsilon}\equiv1$ on $E$, $\varphi_{\varepsilon}%
\equiv0$ outside an $\varepsilon$-neighborhood of $E$ with $|\nabla
\varphi_{\varepsilon}|=1/\varepsilon$ on an $\varepsilon$-layer outside $E$
($\varphi_{\varepsilon}$ can be taken Lipschitz). Then, for each
$t\in[0,\varepsilon]$, denoting by $\Gamma_{t}$ the $t$-layer outside $E$ (in
a such way that $\Gamma_{0}\cup E=E$), it follows from (\ref{lemint}) that
\begin{align*}
\limsup_{p\rightarrow1^{+}}\frac{1}{\|\phi_{p}\|_{1}^{p-1}} & \leq\frac
{\int_{\Omega}|\nabla\varphi_{\varepsilon}|\,dx}{\int_{\Omega}\varphi
_{\varepsilon}\,dx}\\
& =\frac{\int_{0}^{\varepsilon}\int_{\partial(\Gamma_{t}\cup E)}\frac
{1}{\varepsilon}\,dS_{x}\,dt}{|E|+\int_{\Gamma_{\varepsilon}}\varphi
_{\varepsilon}\,dx}\\
& \leq\frac{\frac{1}{\varepsilon}\left( \int_{0}^{\varepsilon}\,dt\right)
\left( \int_{\partial(\Gamma_{\varepsilon}\cup E)}\,dS_{x}\right) }{|E|}%
=\frac{\int_{\partial(\Gamma_{\varepsilon}\cup E)}\,dS_{x}}{|E|}.
\end{align*}
Therefore, making $\varepsilon\rightarrow0^{+}$, we find
\[
\limsup_{p\rightarrow1^{+}}\frac{1}{\|\phi_{p}\|_{1}^{p-1}}\leq\frac{|\partial
E|}{|E|}.
\]

Now, if $E$ touches $\partial\Omega$, we approximate $E$ by a sequence
$\left\{ t_{n}E\right\} $ of subdomains $t_{n}E_{n}\subset\subset\Omega$ such
that $t_{n}\rightarrow1^{-}$. Since $|t_{n}E_{n}|=t_{n}^{N}|E|$ and
$|\partial(t_{n}E)|=t_{n}^{N-1}|\partial E|$ we have that
\[
\limsup_{p\rightarrow1^{+}}\frac{1}{\|\phi_{p}\|_{1}^{p-1}}\leq\frac
{|\partial(t_{n}E)|}{|t_{n}E|}=\frac{1}{t_{n}}\frac{|\partial E|}{|E|}.
\]
Thus, as $t_{n}\rightarrow1^{-}$ we obtain
\[
\limsup_{p\rightarrow1^{+}}\frac{1}{\|\phi_{p}\|_{1}^{p-1}}\leq\frac{|\partial
E|}{|E|}.
\]

\vspace{-.6cm}
\end{proof}

\begin{remark}
\textrm{In the proof of (\ref{limhright}) another estimate like (\ref{hright})
could be obtained by applying the variational characterization
(\ref{lambdamin}) of $\lambda_{p}(\Omega)$ and the well-known lower bound for
$\lambda_{p}(\Omega)$ in terms of the Cheeger constant $h(\Omega)$ (for $2\neq
p>1$, see \cite{Lefton}):
\[
\left( \frac{h(\Omega)}{p}\right) ^{p}\leq\lambda_{p}(\Omega).
\]
In fact, it follows from (\ref{lambdamin}) that $\lambda_{p}(\Omega
)\leq(|\Omega|/\|\phi_{p}\|_{1})^{p-1}$ (see equation (\ref{npbound}), in the
sequel). Thus,
\[
\left( \frac{h(\Omega)}{p}\right) ^{p}\leq\left( \frac{|\Omega|}{\|\phi
_{p}\|_{1}}\right) ^{p-1}.
\]
The chosen estimate (\ref{hright}) emphasizes the direct connection between
the $p$-torsion functions and the Cheeger constant $h(\Omega)$. Moreover, it
follows from the last inequality that
\[
h(\Omega)\leq p\left( \frac{|\Omega|}{\|\phi_{p}\|_{1}}\right) ^{\frac{p-1}%
{p}}
\]
an estimate that is slightly worse than (\ref{hright}), because
\[
h(\Omega)\leq\left( \frac{|\Omega|}{\|\phi_{p}\|_{1}}\right) ^{\frac{p-1}{p}%
}<p\left( \frac{|\Omega|}{\|\phi_{p}\|_{1}}\right) ^{\frac{p-1}{p}}
\]
for $p>1$. }
\end{remark}

\begin{remark}
\label{touch}\textrm{The approximation argument used at the end of the last
proof shows that any Cheeger set touches $\partial\Omega$. In fact, if a
Cheeger set $E$ does not touch $\partial\Omega$ then we can take
$t_{\varepsilon}=1+\varepsilon>1$ such that $t_{\varepsilon}E\subset\Omega$
with $t_{\varepsilon}E$ touching the boundary $\partial\Omega$. But this leads
to a contradiction since
\[
h(\Omega)\leq\frac{|\partial(t_{\varepsilon}E)|}{|t_{\varepsilon}E|}=\frac
{1}{t_{\varepsilon}}\frac{|\partial E|}{|E|}=\frac{1}{t_{\varepsilon}}%
h(\Omega)<h(\Omega).
\]
}
\end{remark}

\vspace{.3cm}

We recall that if $u$ is a continuous and nonnegative function defined in
$\Omega\subset\mathbb{R}^{N}$ then the Schwarz symmetrization $u^{*}$ of $u$
is the function defined in $\Omega^{*}$ that satisfies (see \cite{Kaw0})
\[
\left\{ x\in\Omega\,:\,u(x)>t\right\} ^{*}=\left\{ x\in\Omega^{*}%
\,:\,u^{*}(x)>t\right\}
\]
for all $t\geq0$, where $A^{*}$ denotes the ball with center at the origin and
same Lebesgue measure as $A$.

Let $\Omega$ be a bounded, smooth domain in $\mathbb{R}^{N}$, $N>1$. The
following lemma is a consequence of Talenti's comparison principle
\cite{Talenti} for the $p$-Laplacian, which says that if $u$ and $U$ are,
respectively, solutions of the Dirichlet problems
\[
\left\{
\begin{array}
[c]{rcll}%
-\Delta_{p}u & = & f\text{ \ in \ }\Omega & \\
u & = & 0\text{ \ on }\partial\Omega &
\end{array}
\right. \quad\text{and}\quad\left\{
\begin{array}
[c]{rcll}%
-\Delta_{p}U & = & f^{*}\text{ \ in \ }\Omega^{*} & \\
U & = & 0\text{\ \ \ on\ }\partial\Omega^{*}, &
\end{array}
\right.
\]
where $f^{*}$ is the Schwarz symmetrization of $f$, then the Schwarz
symmetrization $u^{*}$ of $u$ is bounded above by $U$, that is,
\[
u^{*}\leq U\text{ \ in \ }\Omega^{*}.
\]

\begin{lemma}
\label{talenti} Let $\phi_{p}$ and $\Phi_{p}$ be the $p$-torsion functions of
the domains $\Omega$ and $\Omega^{*}$, respectively. If $\phi_{p}^{*}$ denotes
the Schwarz symmetrization of $\phi_{p}$ then
\[
\phi_{p}^{*}\leq\Phi_{p}\text{ \ in \ }\Omega^{*}=B_{R}.
\]

\end{lemma}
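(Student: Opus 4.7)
The plan is to read off this lemma as a direct instance of Talenti's comparison principle (stated immediately above), applied to the specific right-hand side $f \equiv 1$.

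First, I would recall that $\phi_p$ solves $-\Delta_p \phi_p = 1$ in $\Omega$ with $\phi_p = 0$ on $\partial\Omega$, and $\Phi_p$ solves $-\Delta_p \Phi_p = 1$ in $\Omega^* = B_R$ with $\Phi_p = 0$ on $\partial\Omega^*$. The crucial observation is that the right-hand side is the constant $f \equiv 1$. Since the Schwarz symmetrization of a constant function on $\Omega$ is the same constant function on the equimeasurable ball $\Omega^*$, we have $f^* \equiv 1$ on $\Omega^*$. Hence $\Phi_p$ is exactly the function $U$ appearing in Talenti's statement corresponding to $u = \phi_p$.

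Then I would simply invoke Talenti's comparison principle for the $p$-Laplacian (as quoted in the paper right before the lemma), which yields $\phi_p^* \leq \Phi_p$ in $\Omega^*$, finishing the proof.

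There is essentially no obstacle here: everything has been set up so that the lemma is an immediate corollary of Talenti's principle once one notices that the constant datum $1$ is invariant under Schwarz symmetrization. The only minor point worth mentioning explicitly is that $\phi_p$ is nonnegative (by the maximum principle, already recorded in the paper) so that its Schwarz symmetrization $\phi_p^*$ is well-defined in the sense used above.
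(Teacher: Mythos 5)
Your proposal is correct and matches the paper exactly: the lemma is presented there as an immediate consequence of Talenti's comparison principle with $f\equiv 1$, whose Schwarz symmetrization is again $1$ on $\Omega^{*}$. Nothing further is needed.
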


The next result provides localization for $\lambda_{p}(\Omega)$. Moreover it
gives an explicit lower bound to this eigenvalue which will be fundamental to
deduce a uniform (with respect to $p$) upper bound to the quotient
$\dfrac{\|\phi_{p}\|_{1}}{\|\phi_{p}\|_{\infty}}$ and hence to prove that
$\lim\limits_{p\rightarrow1^{+}}\|\phi_{p}\|_{\infty}^{1-p}=h(\Omega)$.

\begin{proposition}
If $\Omega\subset\mathbb{R}^{N}$ is a bounded, smooth domain, then
\begin{equation}
C_{N,p}|\Omega|^{-\frac{p}{N}}\leq\|\phi_{p}\|_{\infty}^{1-p}\leq\lambda
_{p}(\Omega)\leq|\Omega|^{p-1}\|\phi_{p}\|_{1}^{1-p}\label{npbound}%
\end{equation}
where $\lambda_{p}(\Omega)$ and $\phi_{p}$ denote, respectively, the first
eigenvalue of $(\ref{eigenproblem})$ and the $p$-torsion function of $\Omega
$,
\begin{equation}
C_{N,p}=N\omega_{N}^{\frac{p}{N}}\left( \frac{p}{p-1}\right) ^{p-1}\label{CNp}%
\end{equation}
and $\omega_{N}=|B_{1}|$ is the volume of the unit ball in $\mathbb{R}^{N}$.
\end{proposition}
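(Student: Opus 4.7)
The plan is to prove the three inequalities separately, moving from the outside in. The outer estimates rely on Talenti symmetrization and the Rayleigh quotient respectively, while the middle one uses the weak comparison principle for the $p$-Laplacian.

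For the leftmost inequality $C_{N,p}|\Omega|^{-p/N}\le\|\phi_p\|_\infty^{1-p}$, I would invoke Lemma \ref{talenti}, which gives $\phi_p^*\le\Phi_p$ on $\Omega^*=B_R$ with $R=(|\Omega|/\omega_N)^{1/N}$. Since Schwarz symmetrization preserves the sup-norm and $\Phi_p$ attains its maximum at the origin, the explicit formula (\ref{Rtorsion}) yields
\[
\|\phi_p\|_\infty\le\|\Phi_p\|_\infty=\frac{p-1}{p}N^{-1/(p-1)}R^{p/(p-1)}=\frac{p-1}{p}N^{-1/(p-1)}\left(\frac{|\Omega|}{\omega_N}\right)^{p/(N(p-1))}.
\]
Raising both sides to $p-1$ and inverting produces exactly $C_{N,p}|\Omega|^{-p/N}$ as a lower bound for $\|\phi_p\|_\infty^{1-p}$.

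For the middle inequality $\|\phi_p\|_\infty^{1-p}\le\lambda_p(\Omega)$, I would use the homogeneity of $\Delta_p$: for $c>0$ one has $-\Delta_p(c\,\phi_p)=c^{p-1}$. Let $u_p>0$ be the first eigenfunction of (\ref{eigenproblem}) with $\|u_p\|_\infty=1$, so that $-\Delta_p u_p=\lambda_p u_p^{p-1}\le\lambda_p$. Choose $c=\lambda_p^{1/(p-1)}$; then $-\Delta_p(c\,\phi_p)=\lambda_p\ge-\Delta_p u_p$ in $\Omega$, and both functions vanish on $\partial\Omega$. The weak comparison principle for the $p$-Laplacian gives $c\,\phi_p\ge u_p$ in $\Omega$, in particular at the point where $u_p=1$, so $\lambda_p^{1/(p-1)}\|\phi_p\|_\infty\ge1$, which is the desired bound after raising to the $(p-1)$-th power.

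For the rightmost inequality $\lambda_p(\Omega)\le|\Omega|^{p-1}\|\phi_p\|_1^{1-p}$, I would simply plug $\phi_p$ into the Rayleigh quotient characterization (\ref{lambdamin}) and exploit (\ref{torsgrad}):
\[
\lambda_p(\Omega)\le\frac{\int_\Omega|\nabla\phi_p|^p\,dx}{\int_\Omega\phi_p^p\,dx}=\frac{\int_\Omega\phi_p\,dx}{\int_\Omega\phi_p^p\,dx}.
\]
Hölder's inequality $\int_\Omega\phi_p\,dx\le|\Omega|^{1-1/p}\bigl(\int_\Omega\phi_p^p\,dx\bigr)^{1/p}$ rearranges to $\int_\Omega\phi_p^p\,dx\ge|\Omega|^{1-p}\|\phi_p\|_1^p$, and substituting into the previous display finishes the chain.

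Of the three, the only step that is not a direct one-line computation is the middle one: the obstruction there is choosing the correct rescaling constant $c=\lambda_p^{1/(p-1)}$ so that the $p$-homogeneity of $\Delta_p$ matches the right-hand side of the eigenvalue equation and lets comparison close the argument. The other two inequalities are essentially Talenti's theorem applied to the ball, and the variational characterization of $\lambda_p$ tested against $\phi_p$; no further technical difficulty is anticipated.
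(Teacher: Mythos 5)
Your proposal is correct and follows essentially the same route as the paper: Talenti's comparison with the explicit ball torsion function for the left inequality, the weak comparison principle with the rescaled torsion function $\lambda_p(\Omega)^{1/(p-1)}\phi_p$ for the middle one, and the Rayleigh quotient tested on $\phi_p$ together with H\"older and $(\ref{torsgrad})$ for the right one. No gaps.
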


\begin{proof}
The last inequality in (\ref{npbound}) follows from (\ref{lambdamin}) applied
to the function $\phi_{p}$. In fact, by the H\"{o}lder inequality
\[
|\Omega|^{1-p}\left( \int_{\Omega}\phi_{p}\,dx\right) ^{p}\leq|\Omega
|^{1-p}\left[ \left( \int_{\Omega}\phi_{p}^{p}\,dx\right) ^{\frac{1}{p}%
}|\Omega|^{1-\frac{1}{p}}\right] ^{p}=\int_{\Omega}\phi_{p}^{p}\,dx.
\]
Thus,
\begin{align*}
\lambda_{p}(\Omega) & \leq\frac{\int_{\Omega}|\nabla\phi_{p}|^{p}\,dx}%
{\int_{\Omega}\phi_{p}^{p}\,dx}= \frac{\int_{\Omega}\phi_{p}\,dx}{\int
_{\Omega}\phi_{p}^{p}\,dx}\leq\frac{\int_{\Omega}\phi_{p}\,dx}{|\Omega
|^{1-p}\left( \int_{\Omega}\phi_{p}\,dx\right) ^{p}} =\left( \frac{|\Omega
|}{\|\phi_{p}\|_{1}}\right) ^{p-1}.
\end{align*}

The second inequality in (\ref{npbound}) is consequence of applying a
comparison principle to the positive eigenfunction $e_{p}$ (with $\Vert
e_{p}\Vert_{\infty}=1$) and $\phi_{p}$, since both vanish on $\partial\Omega$
and
\[
-\Delta_{p}e_{p}=\lambda_{p}(\Omega)e_{p}^{p-1}\leq\lambda_{p}(\Omega
)=-\Delta_{p}\left(  \lambda_{p}(\Omega)^{\frac{1}{p-1}}\phi_{p}\right)  .
\]
Thus,
\[
0\leq e_{p}\leq\lambda_{p}(\Omega)^{\frac{1}{p-1}}\phi_{p}\ \text{ in }\Omega
\]
and, taking the maximum values of these functions, one obtains $1=\Vert
e_{p}\Vert_{\infty}\leq\lambda_{p}(\Omega)^{\frac{1}{p-1}}\Vert\phi_{p}%
\Vert_{\infty}$ and hence
\begin{equation}
\frac{1}{\Vert\phi_{p}\Vert_{\infty}^{p-1}}\leq\lambda_{p}(\Omega
).\label{lefttors}%
\end{equation}

In order to prove the first inequality in (\ref{npbound}), let $\Phi_{p}$ be
the $p$-torsion function of $\Omega^{*}=B_{R}$, where $B_{R}$ is the ball with
center at the origin and radius $R$ such that $|B_{R}|=|\Omega|$.

According to (\ref{Rtorsion}) we have $\|\Phi_{p}\|_{\infty}=\Phi_{p}\left(
0\right) $ and so
\begin{align*}
\|\Phi_{p}\|_{\infty}^{1-p} & =\left( \frac{p}{p-1}\right) ^{p-1}\frac
{N}{R^{p}}\\
& =\left( \frac{p}{p-1}\right) ^{p-1}\frac{N\omega_{N}^{\frac{p}{N}}}%
{(\omega_{N}R^{N})^{\frac{p}{N}}}=C_{N,p}|B_{R}|^{-\frac{p}{N}} =C_{N,p}%
|\Omega|^{-\frac{p}{N}}%
\end{align*}
where $C_{N,p}$ is defined by (\ref{CNp}).

It follows from Lemma \ref{talenti} that
\[
\phi_{p}^{*}\leq\Phi_{p}\text{ \ in \ }\Omega^{*}.
\]
Thus,
\[
C_{N,p}|\Omega|^{-\frac{p}{N}}=\|\Phi_{p}\|_{\infty}^{1-p}\leq\|\phi_{p}%
^{*}\|_{\infty}^{1-p}=\|\phi_{p}\|_{\infty}^{1-p},
\]
since the Schwarz symmetrization preserves the sup-norm.
\end{proof}

\begin{remark}
\textrm{The following inequalities are also given in Kawohl and Fridman
\cite[Corollary 15]{Kaw2}
\[
N\left( \frac{\omega_{N}}{|\Omega|}\right) ^{\frac{1}{N}}\leq h(\Omega)
\ \ \text{and }\ \lim\limits_{p\rightarrow\infty}\left( \lambda_{p}%
(\Omega)\right) ^{\frac{1}{p}}\geq\lim\limits_{p\rightarrow\infty}\|\phi
_{p}\|_{\infty}^{1-p}\geq\left( \frac{\omega_{N}}{|\Omega|}\right) ^{\frac
{1}{N}}.
\]
Both follow from (\ref{npbound}). }
\end{remark}

\begin{corollary}
\label{explicitlower} Let $\Omega\subset\mathbb{R}^{N}$ be a bounded and
smooth domain. Then,
\[
\int_{\Omega}|u|^{p}\,dx\leq\frac{|\Omega|^{\frac{p}{N}}}{C_{N,p}}\int
_{\Omega}|\nabla u|^{p}\,dx
\]
for all $u\in W_{0}^{1,p}(\Omega)\setminus\{0\}$, where $C_{N,p}$ is given by
$(\ref{CNp})$.
\end{corollary}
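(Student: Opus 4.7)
The plan is to chain together two inequalities already established in the excerpt: the first inequality in (\ref{npbound}) from the preceding proposition, and the variational characterization (\ref{lambdamin}) of $\lambda_p(\Omega)$.

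First I would recall that (\ref{npbound}) gives in particular
\[
C_{N,p}\,|\Omega|^{-\frac{p}{N}}\leq \lambda_p(\Omega),
\]
by combining its first and second inequalities (the $p$-torsion function $\phi_p$ mediates the comparison, but drops out once we pass to $\lambda_p(\Omega)$). This already encodes the sharp lower bound obtained via Talenti symmetrization and the ball computation.

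Next I would invoke the Rayleigh quotient characterization (\ref{lambdamin}): for every $u\in W_{0}^{1,p}(\Omega)\setminus\{0\}$,
\[
\lambda_{p}(\Omega)\leq \frac{\int_{\Omega}|\nabla u|^{p}\,dx}{\int_{\Omega}|u|^{p}\,dx}.
\]
Combining the two displayed inequalities gives
\[
C_{N,p}\,|\Omega|^{-\frac{p}{N}}\leq \frac{\int_{\Omega}|\nabla u|^{p}\,dx}{\int_{\Omega}|u|^{p}\,dx},
\]
and rearranging yields the claimed inequality
\[
\int_{\Omega}|u|^{p}\,dx\leq\frac{|\Omega|^{\frac{p}{N}}}{C_{N,p}}\int_{\Omega}|\nabla u|^{p}\,dx.
\]

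There is essentially no obstacle here: the corollary is a direct restatement of the Faber--Krahn-type lower bound for $\lambda_p(\Omega)$ embedded in (\ref{npbound}), read through the Rayleigh quotient. The only substantive content already lies in the Talenti comparison (Lemma \ref{talenti}) and the explicit computation of $\|\Phi_p\|_{\infty}$ for the ball, both used in the preceding proposition.
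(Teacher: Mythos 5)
Your proposal is correct and is exactly the paper's argument: chain the lower bound $C_{N,p}|\Omega|^{-p/N}\leq\lambda_{p}(\Omega)$ from (\ref{npbound}) with the Rayleigh quotient characterization (\ref{lambdamin}) and rearrange. Nothing to add.
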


\begin{proof}
It follows from (\ref{npbound}) and of the variational characterization of
$\lambda_{p}(\Omega)$ since
\[
C_{N,p}|\Omega|^{-\frac{p}{N}}\leq\lambda_{p}(\Omega)\leq\frac{\int_{\Omega
}|\nabla u|^{p}\,dx}{\int_{\Omega}|u|^{p}\,dx}.
\]

\vspace{-.6cm}
\end{proof}

\begin{theorem}
\label{linfty} Let $\phi_{p}$ be the $p$-torsion function of the bounded
domain $\Omega\subset\mathbb{R}^{N}$. Then,
\begin{equation}
\liminf_{p\rightarrow1^{+}}\int_{\Omega}\frac{\phi_{p}}{\|\phi_{p}\|_{\infty}%
}\,dx \geq\omega_{N}\left( \frac{N}{h(\Omega)}\right) ^{N}\label{linfty1}%
\end{equation}
and
\begin{equation}
\lim_{p\rightarrow1^{+}}\left( \int_{\Omega}\frac{\phi_{p}}{\|\phi
_{p}\|_{\infty}}\,dx\right) ^{p-1}=1,\label{linfty2}%
\end{equation}
from what follows
\begin{equation}
\lim_{p\rightarrow1^{+}}\frac{1}{\|\phi_{p}\|_{\infty}^{p-1}}=h(\Omega
).\label{linfty3}%
\end{equation}

\end{theorem}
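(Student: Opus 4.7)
The three assertions are logically chained: once (\ref{linfty1}) is in hand, (\ref{linfty3}) follows by combining it with the estimate (\ref{hright}) of Theorem \ref{Rcheeger}, and (\ref{linfty2}) is then an algebraic consequence of (\ref{limhright}) and (\ref{linfty3}). My plan therefore concentrates on proving (\ref{linfty1}).

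Write $M:=\|\phi_p\|_\infty$ and $\mu(t):=|\{\phi_p>t\}|$, so Cavalieri gives $\int_\Omega (\phi_p/M)\,dx = M^{-1}\int_0^M \mu(t)\,dt$. The strategy is a Ladyzhenskaya-type reduction to a first-order ODI for $\mu$, with every constant tracked as an explicit function of $p$. Integrating $-\Delta_p\phi_p=1$ over $A_t=\{\phi_p>t\}$ and using the divergence theorem yields $\int_{\partial A_t}|\nabla\phi_p|^{p-1}\,dS=\mu(t)$, while the coarea formula gives $-\mu'(t)=\int_{\partial A_t}|\nabla\phi_p|^{-1}\,dS$. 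H\"older's inequality with conjugate exponents $p$ and $p/(p-1)$ applied to the identity $|\partial A_t|=\int_{\partial A_t}|\nabla\phi_p|^{(p-1)/p}|\nabla\phi_p|^{-(p-1)/p}\,dS$ then produces
\[
|\partial A_t|^{p}\le \mu(t)\,(-\mu'(t))^{p-1}.
\]
Combined with the classical isoperimetric inequality $|\partial A_t|\ge N\omega_N^{1/N}\mu(t)^{(N-1)/N}$, the variables separate, and integration from $t$ up to $M$ (legitimate since the resulting exponent $1-\alpha=p/[N(p-1)]$ is positive and $\mu(M)=0$) produces the pointwise estimate
\[
\mu(t)\ge \omega_N\, N^{N/p}\left(\frac{p}{p-1}\right)^{N(p-1)/p}(M-t)^{N(p-1)/p},
\]
which is in fact sharp for balls. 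One further integration in $t$ and division by $M$ yield
\[
\int_\Omega \frac{\phi_p}{\|\phi_p\|_\infty}\,dx \;\ge\; \omega_N\, N^{N/p}\left(\frac{p}{p-1}\right)^{N(p-1)/p}\frac{p}{N(p-1)+p}\, M^{N(p-1)/p}.
\]

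To pass to $\liminf_{p\to 1^+}$, the elementary limits $N^{N/p}\to N^N$, $(p/(p-1))^{N(p-1)/p}\to 1$ and $p/(N(p-1)+p)\to 1$ reduce the task to controlling $\liminf M^{N(p-1)/p}$. The Proposition's chain $M^{1-p}\le\lambda_p(\Omega)\le(|\Omega|/\|\phi_p\|_1)^{p-1}$ together with (\ref{limhright}) gives $\liminf M^{p-1}\ge 1/h(\Omega)$, whence $\liminf M^{N(p-1)/p}\ge h(\Omega)^{-N}$, and (\ref{linfty1}) is proved. For (\ref{linfty3}), rewrite (\ref{hright}) as $\|\phi_p\|_1\le|\Omega|/h(\Omega)^{p/(p-1)}$; dividing by the lower bound $\liminf \|\phi_p\|_1/M\ge \omega_N(N/h(\Omega))^N$ just established and raising to the $(p-1)$-th power gives $\limsup M^{p-1}\le 1/h(\Omega)$, which closes the sandwich to $\lim M^{1-p}=h(\Omega)$. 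Finally, (\ref{linfty2}) follows immediately from $(\|\phi_p\|_1/M)^{p-1}=\|\phi_p\|_1^{p-1}/M^{p-1}$ and the two limits $\|\phi_p\|_1^{p-1}\to 1/h(\Omega)$ and $M^{p-1}\to 1/h(\Omega)$.

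The main obstacle is the ODI step itself: one has to extract from the $p$-torsion equation a pointwise bound on $\mu(t)$ sharp enough that, after integration and removal of the $p$-dependent prefactors, it reproduces precisely the ball constant $\omega_N(N/h(\Omega))^N$ in the limit $p\to 1^+$. The bookkeeping is delicate because the isoperimetric exponent $\alpha=[p(N-1)-N]/[N(p-1)]$ diverges to $-\infty$ while $1-\alpha=p/[N(p-1)]$ blows up to $+\infty$, and the surviving factor $M^{N(p-1)/p}$ must carry precisely the $h(\Omega)^{-N}$ needed for the ball constant to appear.
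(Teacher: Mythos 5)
Your proposal is correct in substance and in all its limit computations, but it proves the key estimate (\ref{linfty1}) by a genuinely different mechanism than the paper. The paper never touches the boundaries of the level sets: it tests the weak formulation (\ref{aux1}) with the truncations $(\phi_p-k)^{+}$ to get $\int_{A_k}|\nabla\phi_p|^p\,dx=\int_{A_k}(\phi_p-k)\,dx$, bounds the right side via H\"older and the Faber--Krahn-type Poincar\'e inequality of Corollary \ref{explicitlower} (itself obtained from Talenti's comparison, Lemma \ref{talenti}), and thereby derives a differential inequality for $f(k)=\int_{A_k}(\phi_p-k)\,dx$ whose integration gives $\Vert\phi_p\Vert_{\infty}\leq\frac{p+N(p-1)}{p}C_{N,p}^{-N/(p+N(p-1))}\left(\int_{\Omega}\phi_p\,dx\right)^{p/(p+N(p-1))}$ --- equivalent, after the same limit bookkeeping, to your inequality. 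You instead run the classical Talenti/Ladyzhenskaya scheme directly on the distribution function $\mu(t)=|A_t|$: divergence theorem over $A_t$, coarea formula, H\"older on $\partial A_t$, and the isoperimetric inequality, yielding the pointwise bound $\mu(t)\geq\omega_N N^{N/p}\bigl(\tfrac{p}{p-1}\bigr)^{N(p-1)/p}(M-t)^{N(p-1)/p}$, which is indeed an equality for balls by (\ref{Rtorsion}) and is arguably sharper and more transparent. What the paper's route buys is that it is rigorous with no further work: your version needs the identities $\int_{\partial A_t}|\nabla\phi_p|^{p-1}\,dS=\mu(t)$ and $-\mu'(t)\geq\int_{\partial A_t}|\nabla\phi_p|^{-1}\,dS$ for a.e.\ $t$, plus the factorization $1=|\nabla\phi_p|^{(p-1)/p}|\nabla\phi_p|^{-(p-1)/p}$ on $\partial A_t$, all of which require care at critical points of the merely $C^{1,\beta}$ function $\phi_p$ (this is exactly the technical machinery of Talenti's paper, so it can be cited, but it should not be passed over silently). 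Your subsequent deduction of (\ref{linfty3}) from (\ref{hright}) plus the sandwich $\liminf M^{p-1}\geq 1/h(\Omega)$ from (\ref{npbound}) and (\ref{limhright}), and then of (\ref{linfty2}) as a quotient of limits, reverses the paper's order ((\ref{linfty1})$\Rightarrow$(\ref{linfty2})$\Rightarrow$(\ref{linfty3})) but is logically equivalent and equally valid.
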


\begin{proof}
For each $0<k<\|\phi_{p}\|_{\infty}$, define
\[
A_{k}=\left\{ x\in\Omega\,:\,\phi_{p}>k\right\} .
\]

The function
\[
(\phi_{p}-k)^{+}=\max\left\{ \phi_{p}-k,0\right\} =\left\{
\begin{array}
[c]{ll}%
\phi_{p}-k, & \text{if \ }\phi_{p}>k\\
0, & \text{if \ }\phi_{p}\leq k
\end{array}
\right.
\]
belongs to $W_{0}^{1,p}(\Omega)$ since $\phi_{p}\in W_{0}^{1,p}(\Omega)$ and
$\phi_{p}>0$ in $\Omega$. Therefore, taking $v=(\phi_{p}-k)^{+}$ in
(\ref{aux1}), we obtain
\begin{equation}
\int_{A_{k}}|\nabla\phi_{p}|^{p}\,dx=\int_{A_{k}}(\phi_{p}-k)\,dx.\label{x1}%
\end{equation}
(Note that $A_{k}$ is an open set and therefore $\nabla(\phi_{p}-k)^{+}%
=\nabla\phi_{p}$ in $A_{k}$.)

Now, we estimate $\int_{A_{k}}|\nabla\phi_{p}|^{p}\,dx$ from below. For this
we apply H\"{o}lder inequality and Corollary \ref{explicitlower} to obtain
\[
\left( \int_{A_{k}}(\phi_{p}-k)\,dx\right) ^{p}\leq|A_{k}|^{p-1}\int_{A_{k}%
}(\phi_{p}-k)^{p}\,dx\leq\frac{|A_{k}|^{p-1}|A_{k}|^{\frac{p}{N}}}{C_{N,p}%
}\int_{A_{k}}|\nabla\phi_{p}|^{p}\,dx.
\]

Thus,
\[
C_{N,p}|A_{k}|^{-\frac{p}{N}+1-p}\left( \int_{A_{k}}(\phi_{p}-k)\,dx\right)
^{p}\leq\int_{A_{k}}|\nabla\phi_{p}|^{p}\,dx,
\]
what yields
\[
C_{N,p}|A_{k}|^{-\frac{p}{N}+1-p}\left( \int_{A_{k}}(\phi_{p}-k)\,dx\right)
^{p}\leq\int_{A_{k}}(\phi_{p}-k)\,dx.
\]

Hence, we obtain
\[
\left( \int_{A_{k}}(\phi_{p}-k)\,dx\right) ^{p-1}\leq\frac{1}{C_{N,p}}%
|A_{k}|^{\frac{p+N(p-1)}{N}},
\]
an inequality that can be rewritten as
\begin{equation}
\left( \int_{A_{k}}(\phi_{p}-k)\,dx\right) ^{\frac{N(p-1)}{p+N(p-1)}}\leq
C_{N,p}^{-\frac{N}{p+N(p-1)}}|A_{k}|.\label{x2}%
\end{equation}

Let us define
\[
f(k):=\int_{A_{k}}(\phi_{p}-k)\,dx=\int_{k}^{\infty}|A_{t}|\,dt
\]
where the last equality follows from Cavalieri's principle.

Since $f^{\prime}(k)=-|A_{k}|$, (\ref{x2}) implies that
\begin{equation}
1\leq-C_{N,p}^{-\frac{N}{p+N(p-1)}}f(k)^{-\frac{N(p-1)}{p+N(p-1)}}f^{\prime
}(k).\label{x3}%
\end{equation}

Therefore, since $f(k)>0$ and
\[
f(0)=\int_{\Omega}\phi_{p}\,dx
\]
integration of (\ref{x3}) yields an upper bound of $k$ whenever $|A_{k}|>0$:
\begin{align*}
k & \leq\frac{p+N(p-1)}{p}C_{N,p}^{-\frac{N}{p+N(p-1)}}\left[ f(0)^{\frac
{p}{p+N(p-1)}}-f(k)^{\frac{p}{p+N(p-1)}}\right] \\
& \leq\frac{p+N(p-1)}{p}C_{N,p}^{-\frac{N}{p+N(p-1)}}\left( \int_{\Omega}%
\phi_{p}\,dx\right) ^{\frac{p}{p+N(p-1)}}.
\end{align*}

This means that
\[
\Vert\phi_{p}\Vert_{\infty}\leq\frac{p+N(p-1)}{p}C_{N,p}^{-\frac{N}{p+N(p-1)}%
}\left(  \int_{\Omega}\phi_{p}\,dx\right)  ^{\frac{p}{p+N(p-1)}},
\]
which is equivalent to
\begin{equation}
\int_{\Omega}\frac{\phi_{p}}{\Vert\phi_{p}\Vert_{\infty}}\,dx\geq
C_{N,p}^{\frac{N}{p}}\left(  \frac{p}{p+N(p-1)}\right)  ^{\frac{p+N(p-1)}{p}%
}\Vert\phi_{p}\Vert_{\infty}^{\frac{N(p-1)}{p}}.\label{aux2}%
\end{equation}

Now, since (\ref{CNp}) gives that
\begin{equation}
\lim_{p\rightarrow1^{+}}C_{N,p}^{\frac{N}{p}}\left( \frac{p}{p+N(p-1)}\right)
^{\frac{p+N(p-1)}{p}}=\omega_{N}N^{N},\label{CN1}%
\end{equation}
we obtain (\ref{linfty1}), since it follows from (\ref{npbound}) and
(\ref{limhright}) that
\[
\liminf_{p\rightarrow1^{+}}\|\phi_{p}\|_{\infty}^{\frac{N(p-1)}{p}}\geq
\lim_{p\rightarrow1^{+}}\left( \frac{\|\phi_{p}\|_{1}}{|\Omega|}\right)
^{\frac{N(p-1)}{p}}=h(\Omega)^{-N}
\]

Making $p\rightarrow1^{+}$ in (\ref{linfty1}), we obtain (\ref{linfty2}),
since
\[
1=\lim_{p\rightarrow1^{+}}\left[ \omega_{N}\left( \frac{N}{h(\Omega)}\right)
^{N}\right] ^{p-1} \leq\liminf_{p\rightarrow1^{+}}\left( \int_{\Omega}%
\frac{\phi_{p}}{\|\phi_{p}\|_{\infty}}\,dx\right) ^{p-1} \leq\lim
_{p\rightarrow1^{+}}|\Omega|^{p-1}=1.
\]

At last, we obtain from (\ref{linfty2}) and (\ref{limhright}) that
\begin{align*}
\lim_{p\rightarrow1^{+}}\frac{1}{\|\phi_{p}\|_{\infty}^{p-1}} & =\lim
_{p\rightarrow1^{+}}\left( \int_{\Omega}\frac{\phi_{p}}{\|\phi_{p}\|_{\infty}%
}\,dx\right) ^{p-1}\lim\limits_{p\rightarrow1^{+}}\left( \int_{\Omega}\phi
_{p}\,dx\right) ^{1-p}\\
& =\frac{1}{\|\phi_{p}\|_{1}^{p-1}}=h(\Omega)
\end{align*}
and we are done.
\end{proof}

\begin{example}
\textrm{We take advantage of the expression (\ref{Rtorsion}) to verify
directly from (\ref{linfty3}) that $h(\Omega)=\frac{|\partial\Omega|}%
{|\Omega|}$ if $\Omega=B_{R}$, a ball of radius $R$. In fact, for this case it
follows from (\ref{linfty3}) and (\ref{Rtorsion}) that
\begin{align*}
h\left( B_{R}\right)  & =\lim_{p\rightarrow1^{+}}\|\phi_{p}\|_{\infty}^{1-p}\\
& =\lim_{p\rightarrow1^{+}}\left( \frac{p-1}{p}N^{-\frac{1}{p-1}}R^{\frac
{p}{p-1}}\right) ^{1-p}\\
& =N\lim_{p\rightarrow1^{+}}\left( \frac{p-1}{p}\right) ^{1-p}R^{-p}=\frac
{N}{R}=\frac{N\omega_{N}R^{N-1}}{\omega_{N}R^{N}}=\frac{|\partial B_{R}%
|}{|B_{R}|}.
\end{align*}
}
\end{example}

\section{Convex domains}

\label{CD} The main purpose of this section is to present a simpler proof of
(\ref{linfty2}) for the case where $\Omega$ is convex as well as to prove the
estimates
\[
\frac{1}{\|\phi_{p}\|_{\infty}^{p-1}}\leq\lambda_{p}(\Omega)\leq\frac
{1}{\|\phi_{p}\|_{\infty}^{p-1}I(q,N)^{p-1}}
\]
and
\[
\left( \frac{|\Omega|I(q,N)}{\|\phi_{p}\|_{1}}\right) ^{p-1}\leq\lambda
_{p}(\Omega) \leq\left( \frac{|\Omega|}{\|\phi_{p}\|_{1}}\right) ^{p-1}
\]
where
\[
I(q,N)=N\int_{0}^{1}(1-t)^{q}t^{N-1}\,dt\text{ \ and \ }q=\frac{p}{p-1}.
\]

For this, let $B_{R}$ be the ball centered at the origin with radius $R$ and
\[
I(\alpha,N):=N\int_{0}^{1}(1-t)^{\alpha}t^{N-1}\,dt
\]
for each $\alpha>0$ and each positive integer $N$. We remark that
\[
I(\alpha,N)=NB(\alpha-1,N)=N\frac{\Gamma(\alpha-1)\Gamma\left( N\right)
}{\Gamma(\alpha-1+N)}
\]
where $B$ and $\Gamma$ are the Beta and Gamma functions, respectively.

\begin{lemma}
For each positive integer $N$ and $\alpha>0$ one has
\begin{equation}
I(\alpha,N+1)=\frac{N+1}{N+\alpha+1}I(\alpha,N).\label{IqN}%
\end{equation}

Moreover,
\begin{equation}
\lim_{\alpha\rightarrow\infty}I(\alpha,N)^{\frac{1}{\alpha}}=1.\label{limIq}%
\end{equation}

\end{lemma}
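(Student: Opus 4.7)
The plan for the recursion (\ref{IqN}) is to reduce $I(\alpha,N)$ to a Beta function and exploit the functional equation of $\Gamma$. I would perform the change of variables $s=1-t$ inside $I(\alpha,N)=N\int_{0}^{1}(1-t)^{\alpha}t^{N-1}\,dt$ to obtain $I(\alpha,N)=N\int_{0}^{1}s^{\alpha}(1-s)^{N-1}\,ds=NB(\alpha+1,N)$, which by the standard identity yields
\[
I(\alpha,N)=\frac{\Gamma(N+1)\,\Gamma(\alpha+1)}{\Gamma(N+\alpha+1)}.
\]
The recursion (\ref{IqN}) then drops out instantly by writing $\Gamma(N+2)=(N+1)\Gamma(N+1)$ and $\Gamma(N+\alpha+2)=(N+\alpha+1)\Gamma(N+\alpha+1)$ in the corresponding quotient $I(\alpha,N+1)/I(\alpha,N)$. (The same identity can be reached, if one wishes to avoid Beta functions, by induction on $N$ using integration by parts with $u=t^{N}$ and $dv=(1-t)^{\alpha}dt$.)

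For (\ref{limIq}) the plan is to iterate (\ref{IqN}) starting from the base case $I(\alpha,1)=\int_{0}^{1}(1-t)^{\alpha}\,dt=1/(\alpha+1)$, which gives the closed form
\[
I(\alpha,N)=\frac{N!}{(\alpha+1)(\alpha+2)\cdots(\alpha+N)}.
\]
Raising to $1/\alpha$ and taking logarithms, I have
\[
\frac{1}{\alpha}\log I(\alpha,N)=\frac{\log N!}{\alpha}-\sum_{k=1}^{N}\frac{\log(\alpha+k)}{\alpha}.
\]
Each of the $N+1$ terms on the right tends to $0$ as $\alpha\to\infty$ (the first trivially, the others because $\log(\alpha+k)/\alpha\to 0$), so the right-hand side tends to $0$ and hence $I(\alpha,N)^{1/\alpha}\to 1$.

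There is no genuine obstacle here: both parts are routine once one identifies $I(\alpha,N)$ as a rescaled Beta integral. The only mildly delicate point is making sure that the closed-form expression for $I(\alpha,N)$ is justified by induction on $N$ via (\ref{IqN}), so that (\ref{IqN}) is proved first and then used in (\ref{limIq}); this keeps the argument self-contained and avoids circularity.
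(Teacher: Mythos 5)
Your proposal is correct. It differs from the paper's proof in two small but genuine ways. For the recursion (\ref{IqN}) you substitute $s=1-t$ to identify $I(\alpha,N)=NB(\alpha+1,N)=\Gamma(N+1)\Gamma(\alpha+1)/\Gamma(N+\alpha+1)$ and read the recursion off the functional equation of $\Gamma$, whereas the paper works directly with the integral, integrating by parts and splitting $(1-t)^{\alpha+1}=(1-t)^{\alpha}-t(1-t)^{\alpha}$ to solve for $\int_{0}^{1}(1-t)^{\alpha}t^{N}\,dt$; your parenthetical alternative is essentially the paper's argument. (Incidentally, your Beta-function identification is the correct one; the paper's own remark preceding the lemma misstates it as $NB(\alpha-1,N)$.) For the limit (\ref{limIq}) the paper argues by induction on $N$, using $\lim_{\alpha\to\infty}\bigl(\tfrac{N+1}{N+\alpha+1}\bigr)^{1/\alpha}=1$ at each step, while you iterate the recursion down to the base case to get the closed form $I(\alpha,N)=N!/\bigl((\alpha+1)\cdots(\alpha+N)\bigr)$ and then pass to logarithms. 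Both are sound; your route has the advantage of producing an explicit formula for $I(\alpha,N)$ (useful if one wants quantitative rates in $\alpha$, e.g.\ in the estimates (\ref{main})--(\ref{est2}) where $q=p/(p-1)\to\infty$), at the cost of invoking the Beta--Gamma identity, which the paper's purely elementary integration by parts avoids.
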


\begin{proof}
We have
\begin{align*}
(\alpha+1)\int_{0}^{1}(1-t)^{\alpha}t^{N}\,dt & =\left[ -(1-t)^{\alpha+1}%
t^{N}\right] _{0}^{1}+\int_{0}^{1}(1-t)^{\alpha+1}Nt^{N-1}\,dt\\
& =N\int_{0}^{1}(1-t)^{\alpha+1}t^{N-1}\,dt\\
& =N\int_{0}^{1}(1-t)^{\alpha}t^{N-1}\,dt-N\int_{0}^{1}(1-t)^{\alpha}%
t^{N}\,dt\\
& =I(\alpha,N)-N\int_{0}^{1}(1-t)^{\alpha}t^{N}\,dt,
\end{align*}
thus proving (\ref{IqN}), since
\[
\frac{I(\alpha,N+1)}{N+1}=\int_{0}^{1}(1-t)^{\alpha}t^{N}\,dt=\frac
{1}{N+\alpha+1}I(\alpha,N).
\]

The proof of (\ref{limIq}) follows by induction. In fact,
\[
\lim_{\alpha\rightarrow\infty}I(\alpha,1)^{\frac{1}{\alpha}}=\lim
_{\alpha\rightarrow\infty}\left( \int_{0}^{1}(1-t)^{\alpha}\,dr\right)
^{\frac{1}{\alpha}}=\lim_{\alpha\rightarrow\infty}\left( \frac{1}{\alpha
+1}\right) ^{\frac{1}{\alpha}}=1
\]
and by assuming that $\lim\limits_{\alpha\rightarrow\infty}I(\alpha
,N)^{\frac{1}{\alpha}}=1$ we obtain from (\ref{IqN}) that
\[
\lim\limits_{\alpha\rightarrow\infty}I(\alpha,N+1)^{\frac{1}{\alpha}}%
=\lim\limits_{\alpha\rightarrow\infty}\left( \frac{N+1}{N+\alpha+1}\right)
^{\frac{1}{\alpha}}\lim\limits_{\alpha\rightarrow\infty}I(\alpha,N)^{\frac
{1}{\alpha}}=1.
\]

\vspace{-.6cm}
\end{proof}

\begin{lemma}
\label{laux2} If $\alpha>0$, then
\[
\frac{1}{|B_{R}|}\int_{B_{R}}\left( 1-\frac{|x|}{R}\right) ^{\alpha
}\,dx=I(\alpha,N).
\]

\end{lemma}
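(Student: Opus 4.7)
The plan is to evaluate the integral directly by switching to spherical (polar) coordinates, since the integrand depends only on $|x|$. This converts the $N$-dimensional integral over $B_R$ into a one-dimensional integral over $[0,R]$ with the standard surface-measure factor $N\omega_N r^{N-1}$, after which a single linear change of variables yields exactly the definition of $I(\alpha,N)$.

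More concretely, first I would write
\[
\int_{B_R}\left(1-\frac{|x|}{R}\right)^{\alpha}\,dx = \int_0^R \left(1-\frac{r}{R}\right)^{\alpha} N\omega_N\, r^{N-1}\,dr,
\]
using that the $(N-1)$-dimensional measure of the sphere of radius $r$ is $N\omega_N r^{N-1}$. Then I would perform the substitution $t = r/R$, so that $dr = R\,dt$ and $r^{N-1} = R^{N-1}t^{N-1}$, producing
\[
\int_{B_R}\left(1-\frac{|x|}{R}\right)^{\alpha}\,dx = N\omega_N R^{N}\int_0^1 (1-t)^{\alpha}t^{N-1}\,dt = \omega_N R^N\, I(\alpha,N).
\]
Finally, dividing by $|B_R| = \omega_N R^N$ gives the claimed identity.

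There is no genuine obstacle here; the only things to check are that the coarea/polar-coordinates reduction applies (the integrand is radial and continuous, so this is immediate) and that the constant $N\omega_N$ is indeed the surface measure of the unit sphere (which follows from differentiating $|B_r| = \omega_N r^N$ in $r$). The whole proof is a one-line change of variables once polar coordinates are invoked.
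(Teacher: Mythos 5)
Your proof is correct and is essentially the same as the paper's: both reduce the radial integral via polar coordinates with the factor $N\omega_N r^{N-1}$ and a linear rescaling (the paper rescales $B_R$ to $B_1$ first and then passes to polar coordinates, while you do these two steps in the opposite order). No gaps.
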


\begin{proof}
Let $\omega_{N}=|B_{1}|$. We have
\begin{align*}
\frac{1}{|B_{R}|}\int_{B_{R}}\left( 1-\frac{|x|}{R}\right) ^{\alpha}\,dx  &
=\frac{1}{R^{N}\omega_{N}}\int_{B_{1}}\left( 1-|y|\right) ^{\alpha}R^{N}\,dy\\
& =\frac{1}{\omega_{N}}\int_{0}^{1}\int_{\partial B_{r}}(1-|y|)^{\alpha}%
dS_{y}\,dr\\
& =\frac{1}{\omega_{N}}\int_{0}^{1}\left( 1-r\right) ^{\alpha}\int_{\partial
B_{r}}dS_{x}\,dr\\
& =N\int_{0}^{1}\left( 1-r\right) ^{\alpha}r^{N-1}\,dr=I(\alpha,N).
\end{align*}

\vspace{-.6cm}
\end{proof}

\begin{theorem}
Suppose that $\Omega$ is convex. Then,
\begin{equation}
I(q,N)\leq\frac{1}{|\Omega|}\int_{\Omega}\frac{\phi_{p}}{\|\phi_{p}\|_{\infty
}}\,dx\leq1,\label{main}%
\end{equation}
where $q=\dfrac{p}{p-1}$, producing a simpler proof of $(\ref{linfty2})$.
Moreover, we have
\begin{equation}
\frac{1}{\|\phi_{p}\|_{\infty}^{p-1}}\leq\lambda_{p}(\Omega)\leq\frac
{1}{\|\phi_{p}\|_{\infty}^{p-1}I(q,N)^{p-1}},\label{est1}%
\end{equation}
\begin{equation}
\left( \frac{|\Omega| I(q,N)}{\|\phi_{p}\|_{1}}\right) ^{p-1}\leq\lambda
_{p}(\Omega)\leq\left( \frac{|\Omega| }{\|\phi_{p}\|_{1}}\right)
^{p-1}\label{est2}%
\end{equation}
and also
\begin{equation}
\lim_{p\rightarrow1^{+}}\frac{1}{\|\phi_{p}\|_{\infty}^{p-1}}=h(\Omega
)=\lim_{p\rightarrow1^{+}}\frac{1}{\|\phi_{p}\|_{\infty}^{p-1}}%
.\label{cheegerlimit}%
\end{equation}

\end{theorem}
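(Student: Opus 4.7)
The plan is to prove the integral inequality \ref{main} directly, and then extract \ref{est1}, \ref{est2}, and \ref{cheegerlimit} as formal consequences of \ref{main} combined with the estimates already established in \ref{npbound} and \ref{limhright}.

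For \ref{main} itself, the upper bound is trivial since $\phi_{p}\leq\|\phi_{p}\|_{\infty}$ pointwise. The lower bound is the substantive step and uses the power-concavity result cited in the excerpt (see \cite{Saka}): for convex $\Omega$, the function $\phi_{p}^{1-1/p}=\phi_{p}^{1/q}$ is concave on $\overline{\Omega}$, where $q=p/(p-1)$. I would pick a maximum point $x_{0}\in\Omega$ of $\phi_{p}$ (well defined since $\phi_{p}>0$ in $\Omega$ and $\phi_{p}=0$ on $\partial\Omega$) and, for each $\omega\in S^{N-1}$, let $R(\omega)$ be the distance from $x_{0}$ to $\partial\Omega$ in direction $\omega$ (well defined because the convex set $\Omega$ is star-shaped at $x_{0}$). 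Along the segment $t\mapsto x_{0}+tR(\omega)\omega$ with $t\in[0,1]$, concavity together with the boundary condition give
\[
\phi_{p}^{1/q}(x_{0}+tR(\omega)\omega)\geq(1-t)\|\phi_{p}\|_{\infty}^{1/q},
\]
equivalently $\phi_{p}(x_{0}+tR(\omega)\omega)\geq(1-t)^{q}\|\phi_{p}\|_{\infty}$. Writing $\int_{\Omega}\phi_{p}\,dx$ in polar coordinates centered at $x_{0}$, changing radial variable $r=tR(\omega)$, and then using the identities $|\Omega|=\frac{1}{N}\int_{S^{N-1}}R(\omega)^{N}\,d\omega$ and $N\int_{0}^{1}(1-t)^{q}t^{N-1}\,dt=I(q,N)$ (as in the computation underlying Lemma \ref{laux2}), one obtains $\int_{\Omega}\phi_{p}\,dx\geq I(q,N)\,|\Omega|\,\|\phi_{p}\|_{\infty}$, which is the lower bound in \ref{main}.

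The estimates \ref{est1} and \ref{est2} then drop out mechanically. The lower bound in \ref{est1} and the upper bound in \ref{est2} are exactly what is shown in \ref{npbound} and require no convexity. For the upper bound in \ref{est1}, I would substitute $\|\phi_{p}\|_{1}\geq I(q,N)|\Omega|\,\|\phi_{p}\|_{\infty}$ (which is \ref{main} rearranged) into the already-proven upper bound of \ref{est2}. For the lower bound in \ref{est2}, I would substitute the same inequality into the already-proven lower bound of \ref{est1}.

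For the limit \ref{cheegerlimit}, rewrite \ref{est1} as $\lambda_{p}\,I(q,N)^{p-1}\leq\|\phi_{p}\|_{\infty}^{1-p}\leq\lambda_{p}$. Since $p-1=p/q$, \ref{limIq} gives $I(q,N)^{p-1}=(I(q,N)^{1/q})^{p}\to 1$ as $p\to 1^{+}$. The squeeze in \ref{est2}, combined with \ref{limhright} and $|\Omega|^{p-1}\to 1$, shows $\lambda_{p}\to h(\Omega)$, and the claim follows. The only nontrivial ingredient is Sakaguchi's power-concavity of the torsion function, which I am treating as a black box; everything else is a short polar-coordinate computation together with manipulation of inequalities established earlier in the paper.
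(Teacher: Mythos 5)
Your proof is correct, and for the substantive lower bound in (\ref{main}) it rests on the same central idea as the paper's proof --- Sakaguchi's concavity of $\phi_{p}^{1/q}$, which forces $\phi_{p}/\|\phi_{p}\|_{\infty}$ to dominate the $q$-th power of the cone of height $1$ over $\Omega$ with apex at the maximum point --- but you evaluate the resulting integral by a different route. The paper passes to Schwarz symmetrization: it replaces $\phi_{p}$ and the cone $\Psi_{p}$ by their radial rearrangements on the ball $B_{R}$ with $|B_{R}|=|\Omega|$, uses that rearrangement preserves order, powers, sup-norms and integrals, and then invokes Lemma \ref{laux2} to compute $\frac{1}{|B_{R}|}\int_{B_{R}}\left(1-|x|/R\right)^{q}\,dx=I(q,N)$. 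You instead integrate the pointwise bound $\phi_{p}(x_{0}+tR(\omega)\omega)\geq(1-t)^{q}\|\phi_{p}\|_{\infty}$ directly in polar coordinates centered at the maximum point, using star-shapedness of the convex $\Omega$ with respect to $x_{0}$ and the identity $|\Omega|=\frac{1}{N}\int_{S^{N-1}}R(\omega)^{N}\,d\omega$, so that the angular integral factors out and leaves the same Beta-type integral $I(q,N)$. Your route is somewhat more elementary in that it avoids the rearrangement machinery entirely (Lemma \ref{talenti} is in any case not needed at this point of the paper, only the elementary properties of symmetrization), at the cost of writing the change of variables by hand; both arguments are ultimately computing $\frac{1}{|\Omega|}\int_{\Omega}\Psi_{p}^{q}\,dx$. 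The derivations of (\ref{est1}), (\ref{est2}) and (\ref{cheegerlimit}) from (\ref{main}) together with (\ref{npbound}), (\ref{limhright}) and (\ref{limIq}) are exactly the one-line manipulations the paper intends, and your treatment of $I(q,N)^{p-1}=\left(I(q,N)^{1/q}\right)^{p}\rightarrow1$ matches the paper's own computation for (\ref{linfty2}).
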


\begin{proof}
The second inequality in (\ref{main}) is obvious since $\phi_{p}\leq\|\phi
_{p}\|_{\infty}$ in $\Omega$.

For each $p>1$, take $x_{p}$ such that $\phi_{p}(x_{p})=\|\phi_{p}\|_{\infty}$
and consider the function $\Psi_{p}\in C\left( \overline{\Omega}\right) $
whose graph in $\mathbb{R}^{N}\times\mathbb{R}$ is the cone of basis $\Omega$
and height $1$\ reached at $x_{p}$ (thus, $\Psi_{p}=0$ on $\partial\Omega$ and
$\|\Psi_{p}\|_{\infty}=\Psi_{p}(x_{p})=1$).

Since $\Omega$ is convex, it follows from \cite[Thm 2]{Saka} that $\phi
_{p}^{\frac{1}{q}}$ is concave. So, we have
\[
\left( \frac{\phi_{p}}{\|\phi_{p}\|_{\infty}}\right) ^{\frac{1}{q}}\geq
\Psi_{p}\text{ in }\Omega.
\]

Now, let $R>0$ be such that $|B_{R}|=|\Omega|$ and let $\phi_{p}^{*}$ and
$\Psi_{p}^{*}$ denote the Schwarz symmetrizations of $\phi_{p}$ and $\Psi_{p}%
$, respectively. Thus, both $\phi_{p}^{*}$ and $\Psi_{p}^{*}$ are positive and
radially symmetric decreasing in $B_{R}$.

Moreover,

\begin{itemize}
\item[-] $\phi_{p}^{*}=0=\Psi_{p}^{*}$ on $\partial B_{R}$;

\item[-] $\|\phi_{p}\|_{\infty}=\|\phi_{p}^{*}\|_{\infty}=\phi_{p}^{*}(0)$;

\item[-] $\|\Psi_{p}\|_{\infty}=\|\Psi_{p}^{*}\|_{\infty}=1$;

\item[-] $\left( \phi_{p}^{\frac{1}{q}}\right) ^{*}=\left( \phi_{p}^{*}\right)
^{\frac{1}{q}}$, $\left( \Psi_{p}^{q}\right) ^{*}=\left( \Psi_{p}^{*}\right)
^{q}$;

\item[-] $\int_{\Omega}\phi_{p}\,dx=\int_{B_{R}}\phi_{p}^{*}\,dx$ and
$\int_{\Omega}\Psi_{p}\,dx=\int_{B_{R}}\Psi_{p}^{*}\,dx$.
\end{itemize}

From the definition of the Schwarz symmetrization follows that
\[
\Psi_{p}^{*}(x)=\left( 1-\frac{|x|}{R}\right) ,\text{ \ }|x|\leq R.
\]

Since Schwarz symmetrization preserves order and positive powers, we also have
that
\[
\frac{\phi_{p}^{*}(x)}{\|\phi_{p}^{*}\|_{\infty}}\geq\left( \Psi_{p}%
^{*}(x)\right) ^{q}=\left( 1-\frac{|x|}{R}\right) ^{q},\text{ \ }|x|\leq R.
\]

Thus, (\ref{main}) is consequence of Lemma \ref{laux2}, since
\begin{align*}
\frac{1}{|\Omega|}\int_{\Omega}\frac{\phi_{p}}{\|\phi_{p}\|_{\infty}}\,dx  &
=\frac{1}{|B_{R}|}\int_{B_{R}}\frac{\phi_{p}^{*}}{\|\phi_{p}^{*}\|_{\infty}%
}\,dx\\
& \geq\frac{1}{|B_{R}|}\int_{B_{R}}\left( 1-\frac{|x|}{R}\right)
^{q}\,dx=I(q,N).
\end{align*}

From (\ref{main}) and (\ref{limIq}) we obtain
\begin{align*}
1 & \geq\lim_{p\rightarrow1^{+}}\left( \int_{\Omega}\frac{\phi_{p}}{\|\phi
_{p}\|_{\infty}}\,dx\right) ^{p-1}\\
& \geq\lim_{p\rightarrow1^{+}}|\Omega|^{p-1}\lim_{p\rightarrow1^{+}%
}I(q,N)^{p-1}\\
& =\lim_{p\rightarrow1^{+}}I(q,N)^{\frac{p}{q}}=\lim_{p\rightarrow1^{+}}\left(
\lim_{q\rightarrow\infty}I(q,N)^{\frac{1}{q}}\right) ^{p}=1
\end{align*}
thus proving (\ref{linfty2}). From the last estimate and (\ref{npbound}) we
obtain (\ref{est1}), (\ref{est2}) and (\ref{cheegerlimit}).
\end{proof}

\section{Cheeger sets}

\label{CS} In this section we reproduce the current variational approach in
the $BV$ space for the Cheeger problem (\ref{cheginf}) and apply it to verify
that the $L^{1}$-normalized family $\left\{ \dfrac{\phi_{p}}{\|\phi_{p}\|_{1}%
}\right\} _{p\geq1}$ converges (up to subsequences) in $L^{1}(\Omega)$, when
$p\rightarrow1^{+}$, to a function $u\in L^{1}(\Omega)\cap L^{\infty}(\Omega)$
whose $t$-level sets $E_{t}$ are Cheeger sets. Moreover, under convexity of
$\Omega$ we verify that $u=|E_{0}|^{-1}\chi_{E_{0}}$ where $\chi_{E_{0}}$
denotes the characteristic function of the Cheeger set $E_{0}$. The function
$u$ also solves the problem
\begin{equation}
\left\{
\begin{array}
[c]{rcll}%
-\Delta_{1} & = & h(\Omega), & \text{in }\Omega\\
u & = & 0 & \text{on }\partial\Omega
\end{array}
\right. \label{1lapp}%
\end{equation}
in a sense to be clarified in the sequence (Remark \ref{solution}).

For each $v\in L^{1}(\Omega)$, let $\int_{\Omega}|Dv|\,dx$ denote the
\emph{variation} of $v$ in $\Omega$ which is defined by
\[
\int_{\Omega}|Dv|\,dx=\sup\left\{ \int_{\Omega}v\operatorname{div}g\,:\,g\in
C_{0}^{1}\left( \Omega,\mathbb{R}^{N}\right) \text{ and }\|g\|_{\infty}%
\leq1\right\} .
\]
Note that $\int_{\Omega}|Dv|\,dx$ is defined in terms of the weak
(distributional) derivative of $u$. Moreover, the variation of a function
$v\in C^{1}(\Omega)$ coincides with the $L^{1}$-norm of its gradient, that is
\[
\int_{\Omega}|Dv|\,dx=\int_{\Omega}|\nabla v|\,dx\text{ \ when }v\in
C^{1}(\Omega).
\]

The space $BV(\Omega)$ of the \emph{bounded variation functions} is then
defined by
\[
BV(\Omega)=\left\{ v\in L^{1}(\Omega)\,:\,\int_{\Omega}|Dv|\,dx<\infty\right\}
.
\]

It is known (see \cite{Evans}, \cite{Giusti}) that $BV(\Omega)$ is a Banach
space with the norm
\[
\|v\|_{BV}:=\int_{\Omega}|v|\,dx+\int_{\Omega}|Dv|\,dx
\]
and, moreover, the following properties hold (see \cite[Section 5.2]{Evans}):

\begin{lemma}
[lower semicontinuity]\label{lsc} If $v_{n}\rightarrow v$ in $L^{1}(\Omega)$
then
\[
\int_{\Omega}|Dv|\,dx\leq\liminf\limits_{n}\int_{\Omega}|Dv_{n}|\,dx.
\]

\end{lemma}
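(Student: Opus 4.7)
The plan is to work directly from the supremum definition of the total variation, which reduces the claim to an elementary interchange of limit and supremum (one direction only, which is why we get $\liminf$ rather than equality).

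First, I would fix an arbitrary test field $g \in C_0^1(\Omega, \mathbb{R}^N)$ with $\|g\|_\infty \leq 1$. The key observation is that $\operatorname{div} g \in C_0(\Omega)$ is bounded, so the $L^1$ convergence $v_n \to v$ yields
\[
\int_\Omega v \operatorname{div} g \, dx = \lim_{n\to\infty} \int_\Omega v_n \operatorname{div} g \, dx,
\]
simply because $\left|\int_\Omega (v_n - v) \operatorname{div} g \, dx\right| \leq \|\operatorname{div} g\|_\infty \, \|v_n - v\|_1 \to 0$.

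Next, for each $n$ the admissibility of $g$ in the defining supremum of $\int_\Omega |Dv_n|\,dx$ gives the pointwise bound
\[
\int_\Omega v_n \operatorname{div} g \, dx \leq \int_\Omega |Dv_n|\,dx.
\]
Combining these two facts and taking the liminf on the right,
\[
\int_\Omega v \operatorname{div} g \, dx \leq \liminf_{n\to\infty} \int_\Omega |Dv_n|\,dx.
\]

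Finally, since this upper bound is independent of the particular $g$, I would take the supremum over all admissible test fields $g$ on the left-hand side. By the very definition of the variation recalled just before the lemma, this supremum equals $\int_\Omega |Dv|\,dx$, yielding the desired inequality. There is really no substantive obstacle: the only technical ingredient is that $\operatorname{div} g$ is bounded and compactly supported so that $L^1$ convergence of $v_n$ suffices to pass to the limit in the duality pairing; everything else is just unwinding the definition.
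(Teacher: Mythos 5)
Your argument is correct and complete: it is the standard proof of lower semicontinuity of the total variation, reducing the claim to the fact that a supremum of functionals each of which is lower semicontinuous (here, continuous, by the duality pairing against the bounded function $\operatorname{div}g$ and $L^{1}$ convergence) is lower semicontinuous. The paper itself gives no proof of this lemma --- it simply cites Evans--Gariepy, Section 5.2 --- and the proof found there is precisely the one you wrote, so there is nothing to reconcile.
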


\begin{lemma}
[$L^{1}$-compactness]\label{L1compac} If $\left\{ v_{n}\right\} _{n\in
\mathbb{N}}\subset BV(\Omega)$ is a bounded sequence in the $BV$-norm, then
(up to a subsequence) $v_{n}\rightarrow v$ in $L^{1}(\Omega)$.
\end{lemma}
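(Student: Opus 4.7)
The plan is to reduce this to the Fréchet--Kolmogorov (Riesz--Kolmogorov) criterion for relative compactness in $L^1$. First I would extend every $v_n$ to all of $\mathbb{R}^N$ by zero; since the collection is bounded in $L^1(\Omega)$ and since $\Omega$ is bounded, the extended family is $L^1$-bounded and uniformly supported in a fixed large ball, so the decay-at-infinity hypothesis of Fréchet--Kolmogorov is automatic. The only nontrivial point is equi-continuity of translates.

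The key estimate I would establish is the translation bound
\[
\int_{\mathbb{R}^N}|v(x+h)-v(x)|\,dx \;\le\; |h|\,\|v\|_{BV(\mathbb{R}^N)}
\]
for each of the zero-extended functions $v=v_n$. This is obtained by mollifying: set $v^{\varepsilon}=v*\rho_{\varepsilon}$, use the classical inequality $\int|v^{\varepsilon}(x+h)-v^{\varepsilon}(x)|\,dx \le |h|\int|\nabla v^{\varepsilon}|\,dx$ (which is just the fundamental theorem of calculus applied along the segment from $x$ to $x+h$), and then let $\varepsilon\to 0^{+}$, invoking Lemma \ref{lsc} together with the standard smoothing fact $\int|\nabla v^{\varepsilon}|\to\int|Dv|$.

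Once this translation estimate is in hand, the uniform $BV$-bound yields $\sup_n \|\tau_h v_n - v_n\|_{L^1(\mathbb{R}^N)}\le C|h|\to 0$ as $|h|\to 0$. Combined with the common compact support, Fréchet--Kolmogorov produces a subsequence converging in $L^1(\mathbb{R}^N)$ to some $v$; restricting to $\Omega$ gives the claim, and by Lemma \ref{lsc} the limit lies in $BV(\Omega)$. As an alternative to citing Fréchet--Kolmogorov, one can run a diagonal Arzelà--Ascoli argument: for each fixed $\varepsilon>0$ the mollified family $\{v_n*\rho_\varepsilon\}$ is uniformly bounded and equicontinuous on any compact set, hence has a convergent subsequence; combined with $\|v_n*\rho_\varepsilon - v_n\|_{L^1}\le C\varepsilon$ uniformly in $n$, this makes $\{v_n\}$ totally bounded in $L^1$.

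The main obstacle is controlling the behaviour of the zero-extension at $\partial\Omega$: the extension generically introduces a singular jump whose contribution to the variation on $\mathbb{R}^N$ is the $L^1$-trace of $v$ on $\partial\Omega$. The clean fix is the $BV$-trace theorem on a smooth (Lipschitz is enough) domain, which provides a constant $C(\Omega)$ with $\|v\|_{L^1(\partial\Omega)}\le C(\Omega)\|v\|_{BV(\Omega)}$. Hence $\|v_n\|_{BV(\mathbb{R}^N)}\le (1+C(\Omega))\|v_n\|_{BV(\Omega)}$ is uniformly bounded, and the translation estimate above holds with a constant independent of $n$, which is exactly what the compactness argument needs.
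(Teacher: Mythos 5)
Your argument is correct. Note, however, that the paper does not prove this lemma at all: it is quoted as a standard property of $BV(\Omega)$ with a reference to Evans--Gariepy, Section 5.2, so there is no in-paper proof to match against. Your proof is a legitimate self-contained substitute. The textbook proof is essentially your ``alternative'' route (mollify, observe that $\{v_n*\rho_\varepsilon\}_n$ is bounded and equicontinuous for each fixed $\varepsilon$, use $\|v_n*\rho_\varepsilon-v_n\|_{L^1}\le C\varepsilon$ uniformly in $n$, and conclude total boundedness by a diagonal argument), whereas your primary route packages the same ingredients through zero-extension to $\mathbb{R}^N$ and the Fr\'echet--Kolmogorov criterion. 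The two genuinely new points you need for the extension route are both handled correctly: the translation estimate $\|\tau_h v-v\|_{L^1}\le|h|\,|Dv|(\mathbb{R}^N)$ (your mollification argument only needs the inequality $\int|\nabla v^\varepsilon|\,dx\le|Dv|(\mathbb{R}^N)$, which follows from the definition of variation applied to $v^\varepsilon$, rather than the full convergence statement or Lemma~\ref{lsc}, which points in the wrong direction here), and the control of the jump created along $\partial\Omega$ via the $BV$ trace theorem on a Lipschitz domain --- an identity the paper itself records later, when it writes $\int_{\mathbb{R}^N}|Dv|=\int_\Omega|Dv|+\int_{\partial\Omega}|v|\,d\mathcal{H}^{N-1}$ for $v\in\Lambda$. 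The extension/Fr\'echet--Kolmogorov route costs you the trace theorem as an extra input but avoids the diagonal argument; the mollification/Arzel\`a--Ascoli route is more elementary in that it needs no boundary regularity beyond what is required to state the lemma. Either is acceptable here, since the paper assumes $\partial\Omega$ Lipschitz (indeed smooth).
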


\begin{lemma}
[coarea formula]\label{coarea} Let $v\in BV(\Omega)$. Then
\[
\int_{\Omega}|Dv|\,dx=\int_{-\infty}^{\infty}|\partial E_{t}|\,dt.
\]
$($Here $E_{t}:=\left\{ x\in\Omega\,:\,v(x)>t\right\} $ is the $t$-level set
of $v$ and $|\partial E_{t}|$ denotes its perimeter in $\Omega.)$
\end{lemma}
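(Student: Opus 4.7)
The plan is to first handle the smooth case directly and then use approximation together with two complementary inequalities (one from lower semicontinuity, the other from the distributional definition of $\int|Dv|\,dx$) to get equality for general $v\in BV(\Omega)$.

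First I would reduce to the case $v\in C^{\infty}(\Omega)\cap BV(\Omega)$. By Sard's theorem, for almost every $t\in\mathbb{R}$ the value $t$ is regular for $v$, so the level set $\{v=t\}$ is a smooth $(N-1)$-dimensional submanifold of $\Omega$ which coincides with $\partial E_{t}\cap\Omega$ up to $\mathcal{H}^{N-1}$-null sets. The classical coarea formula for Lipschitz maps then gives
\[
\int_{\Omega}|\nabla v|\,dx=\int_{-\infty}^{\infty}\mathcal{H}^{N-1}(\{v=t\})\,dt=\int_{-\infty}^{\infty}|\partial E_{t}|\,dt,
\]
settling the smooth case. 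For general $v\in BV(\Omega)$ I would use the standard mollification/approximation theorem: there exist $v_{n}\in C^{\infty}(\Omega)\cap BV(\Omega)$ with $v_{n}\to v$ in $L^{1}(\Omega)$ and $\int_{\Omega}|\nabla v_{n}|\,dx\to\int_{\Omega}|Dv|\,dx$ (strict convergence).

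For the inequality $\int|Dv|\,dx\ge\int|\partial E_{t}|\,dt$, I would exploit the distributional definition directly. The layer-cake identity
\[
v(x)=\int_{0}^{\infty}\chi_{E_{t}}(x)\,dt-\int_{-\infty}^{0}(1-\chi_{E_{t}}(x))\,dt
\]
combined with Fubini gives, for any $g\in C_{0}^{1}(\Omega,\mathbb{R}^{N})$ with $\|g\|_{\infty}\le 1$,
\[
\int_{\Omega}v\,\operatorname{div}g\,dx=\int_{-\infty}^{\infty}\int_{E_{t}}\operatorname{div}g\,dx\,dt.
\]
Since for each $t$ the quantity $\int_{E_{t}}\operatorname{div}g\,dx$ is bounded by $|\partial E_{t}|$ (this is just the definition of the perimeter of $E_{t}$ applied to the admissible test field $g$), I would conclude
\[
\int_{\Omega}v\,\operatorname{div}g\,dx\le\int_{-\infty}^{\infty}|\partial E_{t}|\,dt,
\]
and passing to the supremum over $g$ yields the desired upper bound for $\int_{\Omega}|Dv|\,dx$.

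For the reverse inequality $\int|Dv|\,dx\le\int|\partial E_{t}|\,dt$, I would use Lemma~\ref{lsc}. Denoting $E_{t}^{n}=\{v_{n}>t\}$, Fubini combined with $v_{n}\to v$ in $L^{1}$ shows that $\chi_{E_{t}^{n}}\to\chi_{E_{t}}$ in $L^{1}(\Omega)$ for almost every $t$; by lower semicontinuity this gives $|\partial E_{t}|\le\liminf_{n}|\partial E_{t}^{n}|$ for a.e.\ $t$. Fatou's lemma followed by the smooth coarea identity applied to each $v_{n}$ then yields
\[
\int_{-\infty}^{\infty}|\partial E_{t}|\,dt\le\liminf_{n}\int_{-\infty}^{\infty}|\partial E_{t}^{n}|\,dt=\liminf_{n}\int_{\Omega}|\nabla v_{n}|\,dx=\int_{\Omega}|Dv|\,dx.
\]
The main obstacle I expect is the measurability and $L^{1}$-convergence of the level-set indicators $\chi_{E_{t}^{n}}$ for a.e.\ $t$; this requires a careful Fubini/Chebyshev argument exploiting $\int_{\mathbb{R}}|E_{t}^{n}\triangle E_{t}|\,dt=\|v_{n}-v\|_{L^{1}(\Omega)}\to 0$, which forces convergence along a subsequence for a.e.\ $t$ and is sufficient since one is already extracting subsequences via the strict convergence of the variations.
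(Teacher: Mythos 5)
The paper does not actually prove this lemma: it is quoted as a standard property of $BV$ functions with a pointer to Evans--Gariepy, so there is no in-paper argument to compare against. Your proposal is, in substance, the standard textbook proof of the $BV$ coarea formula (essentially the one in the cited reference), and it is correct: the layer-cake/Fubini computation, together with $\int_{E_t}\operatorname{div}g\,dx\le|\partial E_t|$ for each admissible $g$, bounds $\int_{\Omega}|Dv|\,dx$ from above by $\int_{-\infty}^{\infty}|\partial E_t|\,dt$ directly from the distributional definition (and needs no approximation at all); smooth approximation with strict convergence of the total variations, a.e.-$t$ $L^{1}$-convergence of the level-set indicators, lower semicontinuity of the perimeter, and Fatou give the opposite bound. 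You also correctly identify the one delicate point, namely passing from $\int_{\mathbb{R}}|E_t^{n}\triangle E_t|\,dt=\|v_n-v\|_{L^1}\to0$ to a subsequence along which $\chi_{E_t^{n}}\to\chi_{E_t}$ in $L^1$ for a.e.\ $t$. The only blemish is cosmetic: the inequality signs announcing the two halves are swapped relative to what each half actually proves --- the layer-cake argument establishes $\int_{\Omega}|Dv|\,dx\le\int|\partial E_t|\,dt$ (not $\ge$), and the lower-semicontinuity argument establishes the reverse; since your own prose (``the desired upper bound for $\int_{\Omega}|Dv|\,dx$'') makes clear which is which, this is a typo rather than a gap.
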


It is also known that when $\partial\Omega$ is Lipschitz, functions in
$BV(\Omega)$ have a trace on $\partial\Omega$. Thus, from now on we assume
that $\partial\Omega$ is Lipschitz.

Since a Cheeger set $E\subset\overline{Omega}$ touches $\partial\Omega$ it is
important to consider the boundary $\partial\Omega$ in the variational
formulation of the Cheeger problem.

We consider the minimizing problem
\begin{equation}
\mu=\inf_{v\in\Lambda}H(v)\label{infH}%
\end{equation}
where
\begin{equation}
H(v):=\int_{\Omega}|Dv|\,dx+\int_{\partial\Omega}|v|\,d\mathcal{H}%
^{N-1}\label{H(v)}%
\end{equation}
and
\[
\Lambda=\left\{ v\in BV\left( \mathbb{R}^{N}\right) \,:\, v\geq0\ \text{in}%
\ \overline{\Omega},\text{ \ }v\equiv0\ \text{in}\ \mathbb{R}^{N}%
\setminus\overline{\Omega},\ \|v\|_{1}=1\right\} .
\]

In the surface integral in (\ref{H(v)}), $|v|$ denotes the internal trace of
$v$ and $d\mathcal{H}^{N-1}$ denotes the $(N-1)$-dimensional Hausdorff measure.

We also remark (see \cite{Evans}) that $H\left( \chi_{E}\right) $ is the
perimeter of $E$ in $\mathbb{R}^{N}$ for $E\subset\overline{\Omega}$ and that
if $v\in\Lambda$ then $v\in BV\left( \mathbb{R}^{N}\right) $ and
\[
\int_{\mathbb{R}^{N}}|Dv|\,dx=\int_{\Omega}|Dv|\,dx+\int_{\partial\Omega
}|v|\,d\mathcal{H}^{N-1}.
\]

\begin{proposition}
It holds $\mu=h(\Omega)$.
\end{proposition}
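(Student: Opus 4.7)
The plan is to establish the two inequalities $\mu\le h(\Omega)$ and $\mu\ge h(\Omega)$ separately, using in each direction the standard correspondence between $BV$ functions and their super-level sets.

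For the upper bound $\mu\le h(\Omega)$, the idea is to exhibit a single explicit competitor that realizes the Cheeger quotient. I would take a Cheeger set $E\subset\overline{\Omega}$ (whose existence follows from $BV$ compactness together with Lemmas \ref{lsc}--\ref{coarea}, or from the level-set construction developed later in this section) and set $v:=|E|^{-1}\chi_{E}$. Then $v$ is nonnegative, supported in $\overline{\Omega}$, vanishes on $\mathbb{R}^{N}\setminus\overline{\Omega}$, and has $\|v\|_{1}=1$, so $v\in\Lambda$. Using the remark that $H(\chi_{E})$ equals the perimeter of $E$ in $\mathbb{R}^{N}$, I would conclude $H(v)=|\partial E|/|E|=h(\Omega)$, whence $\mu\le h(\Omega)$.

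For the reverse inequality $\mu\ge h(\Omega)$, I would slice any admissible $v\in\Lambda$ by its super-level sets. Extending by zero makes $v\in BV(\mathbb{R}^{N})$, and the identity $H(v)=\int_{\mathbb{R}^{N}}|Dv|\,dx$ recorded just before the proposition applies. The coarea formula (Lemma \ref{coarea}) applied in $\mathbb{R}^{N}$ then yields
\[
H(v)=\int_{0}^{\infty}|\partial E_{t}|\,dt,\qquad E_{t}:=\{x\in\mathbb{R}^{N}:v(x)>t\},
\]
with perimeters measured in $\mathbb{R}^{N}$; since $v\ge 0$ and $v\equiv 0$ outside $\overline{\Omega}$, each $E_{t}$ lies in $\overline{\Omega}$ and has finite perimeter for a.e.\ $t>0$. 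Cavalieri's principle gives $\int_{0}^{\infty}|E_{t}|\,dt=\|v\|_{1}=1$. Applying $|\partial E_{t}|/|E_{t}|\ge h(\Omega)$ pointwise a.e.\ and integrating produces $H(v)\ge h(\Omega)\int_{0}^{\infty}|E_{t}|\,dt=h(\Omega)$, and taking the infimum over $\Lambda$ finishes the argument.

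The one point where I would tread most carefully is the bound $|\partial E_{t}|/|E_{t}|\ge h(\Omega)$: the super-level sets $E_{t}$ need not be smooth subdomains, whereas $h(\Omega)$ was originally defined in (\ref{cheginf}) as an infimum restricted to smooth $E\subset\Omega$. This is a standard relaxation fact (smooth subdomains are dense, in both perimeter and volume, among measurable subsets of $\overline{\Omega}$ of finite perimeter, so the two definitions of $h(\Omega)$ coincide), but it is the step that deserves explicit justification in a complete write-up, and it is the reason the $BV$ formulation is the natural setting for the problem.
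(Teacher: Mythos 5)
Your lower bound $\mu\ge h(\Omega)$ is exactly the paper's argument: extend $v\in\Lambda$ by zero, write $H(v)=\int_{\mathbb{R}^N}|Dv|\,dx$, apply the coarea formula and Cavalieri's principle, and use $|\partial E_t|\ge h(\Omega)|E_t|$ levelwise. Your caveat about non-smooth level sets is well taken --- the paper silently uses the same relaxation fact --- so being explicit about it is an improvement rather than a defect.

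Where you diverge is the upper bound, and there your route is both heavier and potentially circular. You produce the competitor $v=\chi_E/|E|$ from an \emph{actual Cheeger set} $E$, i.e.\ from a minimizer of (\ref{cheginf}), and you propose to get its existence either from $BV$ compactness or ``from the level-set construction developed later in this section.'' The latter is circular: Proposition \ref{Cheegsets} and the main theorem of that section both use $\mu=h(\Omega)$ to identify $H$-minimizers with Cheeger sets, so you cannot import existence from there. The former is workable but is exactly the machinery this proposition is meant to set up. The paper avoids the issue entirely: for an \emph{arbitrary} smooth $E\subset\Omega$ one has $\chi_E/|E|\in\Lambda$ and
\[
\mu\le H\!\left(\frac{\chi_E}{|E|}\right)=\frac{H(\chi_E)}{|E|}=\frac{|\partial E|}{|E|},
\]
and taking the infimum over all such $E$ gives $\mu\le h(\Omega)$ with no existence statement needed. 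I would rewrite your first half this way; the second half can stand as is.
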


\begin{proof}
For an arbitrary $E\subset\overline{\Omega}$ we have
\[
\frac{|\partial E|}{|E|}=\frac{H\left( \chi_{E}\right) }{|E|}=H\left(
\frac{\chi_{E}}{|E|}\right) \geq\mu
\]
what implies, in view of (\ref{cheginf}), that $\mu\leq h(\Omega)$. On the
other hand, if $v\in\Lambda$ \ it follows from Lemma \ref{coarea} and
Cavalieri's principle that
\begin{align*}
H(v) & =\int_{\mathbb{R}^{N}}|Dv|\,dx\\
& =\int_{0}^{\infty}|\partial E_{t}|\,dt\\
& =\int_{0}^{\infty}\frac{|\partial E_{t}|}{|E_{t}|}|E_{t}|\,dt\\
& \geq h(\Omega)\int_{0}^{\infty}|E_{t}|\,dt=h(\Omega)\|v\|_{1}=h(\Omega).
\end{align*}
Since $v\in\Lambda$ is arbitrary, we conclude from (\ref{infH}) that
$h(\Omega)\leq\mu$.
\end{proof}

\begin{remark}
\label{solution}\textrm{Since $\mu=h(\Omega)$, the problem (\ref{infH}) can be
considered as a variational formulation of (\ref{1lapp}). In view (\ref{kaw})
such a solution is considered as an eigenvalue of (\ref{1lapp}). For details
we refer to \cite[Remark 7]{Kaw2}. }
\end{remark}

The existence of a Cheeger set $E\subset\overline{\Omega}$ is equivalent to
finding a minimizer $u$ for the problem (\ref{infH}) in the following sense:

\begin{proposition}
\label{Cheegsets} If $u$ minimizes $(\ref{infH})$, then its $t$-level sets
\[
E_{t}:=\left\{ x\in\Omega\,:\,u(x)>t\right\}
\]
satisfying $|E_{t}|>0$ are Cheeger sets. In particular, $E_{0}$ is a Cheeger set.

On the other hand, if $E\subset\overline{\Omega}$ is a Cheeger set, then
$\frac{\chi_{E}}{|E|}$ minimizes $(\ref{infH})$.
\end{proposition}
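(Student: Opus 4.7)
The plan is to exploit the coarea formula and Cavalieri's principle in the style of the proof that $\mu=h(\Omega)$, but now extracting equality in every intermediate inequality and propagating it to individual level sets.

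For the forward direction, let $u$ be a minimizer of (\ref{infH}), so that $H(u)=h(\Omega)$ and $\|u\|_{1}=1$. Applying Lemma \ref{coarea} on $\mathbb{R}^{N}$ and Cavalieri's principle as in the proof of $\mu=h(\Omega)$, I would write
\[
h(\Omega)=H(u)=\int_{0}^{\infty}|\partial E_{t}|\,dt \geq \int_{0}^{\infty}\frac{|\partial E_{t}|}{|E_{t}|}|E_{t}|\,dt\geq h(\Omega)\int_{0}^{\infty}|E_{t}|\,dt=h(\Omega)\|u\|_{1}=h(\Omega),
\]
where the middle inequality uses $|\partial E_{t}|\geq h(\Omega)|E_{t}|$ whenever $|E_{t}|>0$ (and the integrand is $0$ otherwise). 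Equality throughout forces $|\partial E_{t}|=h(\Omega)|E_{t}|$ for a.e.\ $t>0$ with $|E_{t}|>0$, which immediately gives that these level sets are Cheeger sets.

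To handle the specific set $E_{0}$, I would pass to the limit along a sequence $t_{n}\downarrow 0$ of admissible values. Since $E_{t_{n}}\nearrow E_{0}$, monotone convergence gives $|E_{t_{n}}|\to|E_{0}|>0$ (positivity of $|E_0|$ follows from $\|u\|_1=1$), and $\chi_{E_{t_{n}}}\to\chi_{E_{0}}$ in $L^{1}(\mathbb{R}^{N})$. The lower semicontinuity statement of Lemma \ref{lsc} then yields
\[
|\partial E_{0}|\leq\liminf_{n}|\partial E_{t_{n}}|=h(\Omega)\lim_{n}|E_{t_{n}}|=h(\Omega)|E_{0}|,
\]
while the reverse inequality is the defining property of $h(\Omega)$. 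Hence $E_{0}$ is a Cheeger set as well.

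For the converse direction, given a Cheeger set $E\subset\overline{\Omega}$, set $v=\chi_{E}/|E|$. Then $v\in BV(\mathbb{R}^{N})$, $v\geq 0$, $v\equiv 0$ outside $\overline{\Omega}$, and $\|v\|_{1}=1$, so $v\in\Lambda$. Using the remark preceding the proposition that $H(\chi_{E})$ equals the perimeter of $E$ in $\mathbb{R}^{N}$, I would compute
\[
H(v)=\frac{1}{|E|}H(\chi_{E})=\frac{|\partial E|}{|E|}=h(\Omega)=\mu,
\]
so $v$ achieves the infimum and is therefore a minimizer. The main obstacle is really only the passage from a.e.\ $t>0$ to the specific value $t=0$: the rest is a direct rearrangement of the inequalities already used to identify $\mu$ with $h(\Omega)$.
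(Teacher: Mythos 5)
Your argument is correct in substance but follows a genuinely different route from the paper. The paper (following Carlier--Comte) proceeds by producing new minimizers: it truncates $u$ via $T_n$ and passes to the limit to show that $\chi_{E_0}/|E_0|$ itself minimizes $H$, and then, for $t>0$ with $|E_t|>0$, renormalizes $(u-t)_+$ and repeats the argument, so that every such level set is handled directly. You instead extract the equality case in the Cavalieri/coarea chain already used to prove $\mu=h(\Omega)$: since both ends of
\[
h(\Omega)=H(u)=\int_{0}^{\infty}|\partial E_{t}|\,dt\geq h(\Omega)\int_{0}^{\infty}|E_{t}|\,dt=h(\Omega)
\]
coincide, the nonnegative integrand $|\partial E_{t}|-h(\Omega)|E_{t}|$ vanishes for a.e.\ $t$, and you then reach $E_{0}$ by monotone approximation $E_{t_n}\nearrow E_0$ together with lower semicontinuity of the perimeter. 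Your approach is more self-contained (it needs no new competitor functions, only the two lemmas already quoted), while the paper's approach has the advantage of producing the explicit minimizer $\chi_{E_t}/|E_t|$ for \emph{every} $t$ with $|E_t|>0$, which is also what feeds the convex-domain remark later on. One technical point worth making explicit: you apply the lower semicontinuity of Lemma \ref{lsc} to $\chi_{E_{t_n}}\to\chi_{E_0}$ in $\mathbb{R}^{N}$ rather than in $\Omega$, since the relevant perimeter here is the full perimeter $H(\chi_{E_t})$ including the piece on $\partial\Omega$; the lemma does hold on $\mathbb{R}^{N}$, but you should say so.

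There is one loose end. The proposition asserts that \emph{every} level set with $|E_t|>0$ is a Cheeger set, whereas your coarea argument only delivers this for a.e.\ such $t$, and you then repair only the single value $t=0$. As written, an exceptional $t_0>0$ with $|E_{t_0}|>0$ is not covered. Fortunately your own device closes the gap verbatim: for any such $t_0$, pick admissible $t_n\downarrow t_0$ (possible because $|E_{t_n}|\to|E_{t_0}|>0$ and the admissible set has full measure), note $E_{t_n}\nearrow E_{t_0}$, and conclude $|\partial E_{t_0}|\leq\liminf_n|\partial E_{t_n}|=h(\Omega)|E_{t_0}|$ exactly as you did at $t=0$. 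You should state this extension explicitly rather than singling out $t=0$.
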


\begin{proof}
[Proof (sketch)]For the first claim we present only a sketch and refer to
\cite[Theor. 2]{Carlier} for details.

Let $u\in\Lambda$ be a minimizer of (\ref{infH}) and define
\[
T_{n}(v)=\left\{
\begin{array}
[c]{ll}%
0 & \text{if }0<v\\
nv & \text{if }0\leq v<\frac{1}{n}\\
1 & \text{if }v\geq\frac{1}{n}.
\end{array}
\right.
\]
For $n$ large enough the function $w_{n}=\frac{T_{n}(u)}{\left\Vert
T_{n}(u)\right\Vert _{1}}$ also minimizes (\ref{infH}) in $\Lambda$. Hence the
convergence in $L^{1}$ of $w_{n}$ to $w_{0}:=\frac{\chi_{E_{0}}}{|E_{0}|}%
\in\Lambda$ yields that $w_{0}$ solves (\ref{infH}). Therefore,
\[
h(\Omega)=H(w_{0})=\frac{1}{|E_{0}|}H(\chi_{E_{0}})=\frac{|\partial E_{0}%
|}{|E_{0}|}%
\]
proving that $E_{0}$ is a Cheeger set.

If $t>0$ is such that $|E_{t}|>0$ then it is possible to verify that the
function $v:=\frac{(u-t)_{+}}{\left\Vert (u-t)_{+}\right\Vert _{1}}$ solves
(\ref{infH}). Thus, by applying the previous argument for $E_{0}^{v}$, the
zero-level set of $v$, we conclude that $\frac{\chi_{E_{0}^{v}}}{|E_{0}^{v}|}$
also solves (\ref{infH}). Since
\[
\frac{\chi_{E_{0}^{v}}}{|E_{0}^{v}|}=\frac{\chi_{E_{t}}}{|E_{t}|}%
\]
we are done.

Now, in order to prove the second claim, let $E$ be a Cheeger set and take
$u=\frac{\chi_{E}}{|E|}$. Then, $u\in\Lambda$ and
\[
h(\Omega)=\frac{|\partial E|}{|E|}=\frac{H\left( \chi_{E}\right) }%
{|E|}=H\left( \frac{\chi_{E}}{|E|}\right) =H(u).
\]

\vspace{-.6cm}
\end{proof}

Now we prove our main result on Cheeger sets and the minimization of $H$.

\begin{theorem}
Let $u_{p}:=\dfrac{\phi_{p}}{\|\phi_{p}\|_{1}}$. Then there exists a sequence
$\left\{ u_{p_{n}}\right\} \subset C_{0}^{1}\left( \overline{\Omega}\right)
\cap BV(\Omega)$ and a function $u\in\Lambda\cap L^{\infty}(\Omega)$, such
that $p_{n}\rightarrow1^{+}$ and $u_{p_{n}}\rightarrow u$ in $L^{1}(\Omega)$. Moreover:

\begin{enumerate}
\item[$(i)$] $u=0$ on $\partial\Omega$;

\item[$(ii)$] $0\leq u\leq\omega_{N}^{-1}\left( \dfrac{h(\Omega)}{N}\right)
^{N}$ in $\Omega$;

\item[$(iii)$] $h(\Omega)=H(u)$, that is, $u$ minimizes $(\ref{infH})$;

\item[$(iv)$] Almost all $t$-level sets of $u$ are Cheeger sets for $0\leq
t\leq\|u\|_{\infty}$.
\end{enumerate}
\end{theorem}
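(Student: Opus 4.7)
The plan is to obtain $u$ as the $L^1$-limit of a subsequence of the normalized torsion functions via $BV$-compactness on $\mathbb{R}^N$ (after extending by zero), and then to read off each of (i)--(iv) from the earlier results. First I would extend each $u_p:=\phi_p/\|\phi_p\|_1$ to $\mathbb{R}^N$ by zero; since $\phi_p\in C_0^1(\overline{\Omega})$ the extension lies in $W^{1,p}(\mathbb{R}^N)$ with no jump across $\partial\Omega$, so $\int_{\mathbb{R}^N}|Du_p|\,dx=\int_{\Omega}|\nabla u_p|\,dx$. H\"older's inequality together with (\ref{torsgrad}) then yields
\[
\int_{\Omega}|\nabla u_p|\,dx\leq\frac{|\Omega|^{(p-1)/p}}{\|\phi_p\|_1}\left(\int_{\Omega}|\nabla\phi_p|^p\,dx\right)^{1/p}=\left(\frac{|\Omega|}{\|\phi_p\|_1}\right)^{(p-1)/p},
\]
which by (\ref{limhright}) tends to $h(\Omega)$ as $p\to 1^+$. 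Combined with $\|u_p\|_1=1$, this bounds $\{u_p\}$ in $BV(\mathbb{R}^N)$, and Lemma \ref{L1compac} delivers a subsequence $u_{p_n}\to u$ in $L^1$ with $u\in BV(\mathbb{R}^N)$. Nonnegativity, the identity $\|u\|_1=1$, and vanishing outside $\overline{\Omega}$ pass to the limit, so $u\in\Lambda$; in particular, the outer trace on $\partial\Omega$ vanishes, which is (i).

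For (ii) I would exploit Theorem \ref{linfty}: estimate (\ref{linfty1}) rewrites as $\liminf_p \|\phi_p\|_1/\|\phi_p\|_\infty\geq \omega_N(N/h(\Omega))^N$, hence $\|u_{p_n}\|_\infty=\|\phi_{p_n}\|_\infty/\|\phi_{p_n}\|_1\leq \omega_N^{-1}(h(\Omega)/N)^N+o(1)$, and the uniform $L^\infty$ bound together with $L^1$-convergence forces $u\leq \omega_N^{-1}(h(\Omega)/N)^N$ almost everywhere in $\Omega$. For (iii) I would apply the $BV$ lower semicontinuity on $\mathbb{R}^N$ (Lemma \ref{lsc}) to obtain
\[
H(u)=\int_{\mathbb{R}^N}|Du|\,dx\leq\liminf_n\int_{\Omega}|\nabla u_{p_n}|\,dx\leq h(\Omega),
\]
and combine with the reverse inequality $H(u)\geq\mu=h(\Omega)$, which holds because $u\in\Lambda$, to conclude $H(u)=h(\Omega)$. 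Finally, (iv) is an immediate application of Proposition \ref{Cheegsets} to the minimizer $u$.

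The main obstacle is the uniform $L^\infty$ bound in (ii): it rests on the sharp estimate (\ref{linfty1}), itself obtained via the Moser-style truncation and Talenti comparison that were set up in Theorem \ref{linfty}. By contrast, the $BV$-compactness and lower semicontinuity steps are routine once the uniform bound on $\int_{\Omega}|\nabla u_p|\,dx$ is in place, and the passage from $H(u)=h(\Omega)$ to the Cheeger property of the level sets in (iv) is a direct appeal to the variational reformulation already proved in Proposition \ref{Cheegsets}.
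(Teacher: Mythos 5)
Your proposal is correct and follows essentially the same route as the paper: the $BV$-bound via H\"older and $\int_\Omega|\nabla\phi_p|^p\,dx=\int_\Omega\phi_p\,dx$, $L^1$-compactness, lower semicontinuity for $(iii)$, estimate (\ref{linfty1}) for $(ii)$, and Proposition \ref{Cheegsets} for $(iv)$. Your handling of the boundary term --- extending by zero to $\mathbb{R}^{N}$ and using $\int_{\mathbb{R}^{N}}|Du|\,dx=\int_{\Omega}|Du|\,dx+\int_{\partial\Omega}|u|\,d\mathcal{H}^{N-1}=H(u)$ before passing to the limit --- is in fact slightly more careful than the paper's appeal to $(i)$, since the internal trace is not continuous under $L^{1}$-convergence.
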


\begin{proof}
Since
\[
\int_{\Omega}|\nabla\phi_{p}|^{p}\,dx=\int_{\Omega}\phi_{p}\,dx
\]
we have that
\[
\int_{\Omega}|\nabla u_{p}|^{p}\,dx=\frac{1}{\Vert\phi_{p}\Vert_{1}^{p-1}}%
\int_{\Omega}u_{p}\,dx=\frac{1}{\Vert\phi_{p}\Vert_{1}^{p-1}}.
\]
Thus, it follows from H\"{o}lder inequality that
\[
\int_{\Omega}|\nabla u_{p}|\,dx\leq\left(  \int_{\Omega}|\nabla u_{p}%
|^{p}\,dx\right)  ^{\frac{1}{p}}|\Omega|^{1-\frac{1}{p}}=\left(  \frac
{1}{\Vert\phi_{p}\Vert_{1}^{p-1}}\right)  ^{\frac{1}{p}}|\Omega|^{1-\frac
{1}{p}}.
\]

Hence, since $u_{p}\in C^{1,\beta}\left( \overline{\Omega}\right) \cap
W_{0}^{1,p}(\Omega)\subset C_{0}^{1}\left( \overline{\Omega}\right) $ (here
$\beta$ may depend on $p$) and $\|u_{p}\|_{1}=1$, we have
\begin{align*}
\|u_{p}\|_{BV} & =\int_{\Omega}u_{p}\,dx+\int_{\Omega}|Du_{p}|\,dx\\
& =1+\int_{\Omega}|\nabla u_{p}|\,dx\\
& \leq1+\left( \frac{1}{\|\phi_{p}\|_{1}^{p-1}}\right) ^{\frac{1}{p}}%
|\Omega|^{1-\frac{1}{p}}
\begin{array}
[c]{c}%
\\
\overrightarrow{p\rightarrow1^{+}}%
\end{array}
1+h(\Omega)<\infty.
\end{align*}

Therefore the family $\left\{  u_{p}\right\}  _{p\geq1}$ is a bounded in
$BV(\Omega)$ for all $p$ sufficiently close to $1^{+}$. Thus, it follows from
Lemma \ref{L1compac} that there exists a sequence $p_{n}\rightarrow1^{+}$ such
that
\[
u_{p_{n}}\rightarrow u\text{ \ in \ }L^{1}(\Omega).
\]
Moreover, $\Vert u\Vert_{1}=1$ and, up to a subsequence, we can assume that
$u_{p_{n}}\rightarrow u$ a.e. in $\Omega$ and that $u$ satisfies properties
$(i)$ and $(ii)$, the upper bound in $(ii)$ being a consequence of
(\ref{linfty1}).

Lemma \ref{lsc} applied to the sequence $\left\{ u_{p_{n}}\right\} $ yields
\begin{align*}
\int_{\Omega}|Du|\,dx & \leq\liminf\limits_{n}\int_{\Omega}|Du_{p_{n}}|\,dx\\
& =\liminf\limits_{n}\int_{\Omega}|\nabla u_{p_{n}}|\,dx\\
& \leq\lim_{n}\left( \int_{\Omega}|\nabla u_{p_{n}}|^{p_{n}}\,dx\right)
^{\frac{1}{p_{n}}}|\Omega|^{1-\frac{1}{p_{n}}}\\
& =\lim_{n}\left( \frac{1}{\|\phi_{p}\|_{1}^{p_{n}}}\int_{\Omega}|\nabla
\phi_{p_{n}}|^{p}\,dx\right) ^{\frac{1}{p_{n}}}\\
& =\lim_{n}\left( \frac{1}{\|\phi_{p}\|_{1}^{p_{n}}}\int_{\Omega}\phi_{p_{n}%
}\,dx\right) ^{\frac{1}{p_{n}}}=\lim_{n} \left( \frac{1}{\|\phi_{p}%
\|_{1}^{p_{n}-1}}\right) ^{\frac{1}{p_{n}}}=h(\Omega).
\end{align*}
Thus, $u\in\Lambda$ and, since $u=0$ on $\partial\Omega$, we have
\begin{align*}
H(u) & =\int_{\Omega}|Du|\,dx+\int_{\partial\Omega}|u|\,d\mathcal{H}^{N-1}\\
& =\int_{\Omega}|Du|\,dx\leq h(\Omega)=\inf\limits_{v\in\Lambda}H(v)\leq H(u).
\end{align*}
Hence, $H(u)=h(\Omega)$, that is, $u$ is a minimizer of (\ref{infH}), proving
$(iii)$.

The claim $(iv)$ is consequence of $(iii)$ and Proposition \ref{Cheegsets}.
\end{proof}

\begin{remark}
\textrm{If $\Omega$ is convex, then the function $u$ of the last theorem can
be written as
\[
u=\|u\|_{\infty}\chi_{E_{0}}=\frac{\chi_{E_{0}}}{|E_{0}|}
\]
where $E_{0}=\left\{ x\in\overline{\Omega}\,:\,u(x)>0\right\} $. In fact, this
follows from the uniqueness of the Cheeger set, since $E_{0}=E_{t}$ for almost
all $t$-level set $E_{t}$ of $u$, with $0\leq t\leq\|u\|_{\infty}$. Thus,
since $\|u\|_{\infty}\chi_{E_{0}}\geq u$ in $E_{0}$ we have
\begin{align*}
\|\|u\|_{\infty}\chi_{E_{0}}-u\|_{1} & =\int_{\Omega}|\|u\|_{\infty}%
\chi_{E_{0}}-u|\,dx\\
& =\int_{E_{0}}\left( \|u\|_{\infty}\chi_{E_{0}}-u\right) \,dx\\
& =\|u\|_{\infty}|E_{0}|-\int_{0}^{\|u\|_{\infty}}|E_{t}|\,dt\\
& =\|u\|_{\infty}|E_{0}|-\int_{0}^{\|u\|_{\infty}}|E_{0}|\,dt=0.
\end{align*}
}

\textrm{Since $\|u\|_{1}=1$ we also have $1=\|u\|_{\infty}\|\chi_{E_{0}}%
\|_{1}=\|u\|_{\infty}|E_{0}|$ implying that $\|u\|_{\infty}=\dfrac{1}{|E_{0}%
|}$. }
\end{remark}

Since $\frac{\chi_{E_{0}}}{|E_{0}|}$ is a Cheeger set for $E_{0}=\left\{
x\in\overline{\Omega}\,:\,u(x)>0\right\} $, it is interesting to notice that
the claim $(ii)$ gives a lower bound for the volume $|E_{0}|$ in terms of the
Cheeger constant. In fact,
\[
1=\|u\|_{1}=\int_{E_{0}}u\,dx\leq|E_{0}|\|u\|_{\infty}\leq|E_{0}|\omega
_{N}^{-1}\left( \dfrac{h(\Omega)}{N}\right) ^{N}
\]
implies that
\[
\omega_{N}\left( \frac{N}{h(\Omega)}\right) ^{N}\leq|E_{0}|
\]
or, what is the same,
\begin{equation}
|B_{1}|h(B_{1})^{N}\leq|E_{0}|h(\Omega)^{N}\label{lowerE}%
\end{equation}
since $\omega_{N}=|B_{1}|$ and $h(B_{1})=N$.

Moreover, since $h(\Omega)|E_{0}|=|\partial E_{0}|$, we also have
\[
h(\Omega)|B_{1}|\left( \frac{N}{h(\Omega)}\right) ^{N}\leq|\partial E_{0}|.
\]

\begin{example}
\textrm{As pointed out in \cite{Kaw2}, if $\Omega=[-1,1]\times[-1,1]$ is the
square, then
\[
h(\Omega)=\frac{4-\pi}{4-2\sqrt{\pi}},
\]
and the (unique) Cheeger set $E$ satisfies
\[
|E|=4-\frac{\left( 4-2\sqrt{\pi}\right) ^{2}}{4-\pi}\approx3.7587
\]
and
\[
|\partial E|=8-\frac{(8-2\pi)(4-2\sqrt{\pi})}{4-\pi}\approx7.0898
\]
Thus, we can evaluate (\ref{lowerE}):
\[
\omega_{2}\left( \frac{2}{h(\Omega)}\right) ^{2}=4\pi\left( \frac{4-2\sqrt
{\pi}}{4-\pi}\right) ^{2}\approx3.532<3.7587\approx|E|
\]
and
\[
h(\Omega)\omega_{2}\left( \frac{2}{h(\Omega)}\right) ^{2}=\frac{4\pi}%
{h(\Omega)}=\frac{4\pi(4-2\sqrt{\pi})}{4-\pi}\approx6.6622<7.0898\approx
|\partial E|.
\]
}
\end{example}

\begin{remark}
\textrm{We remark that (\ref{lowerE}) is optimal if $\Omega=B_{R}$ is a ball
since $E=B_{R}$ is the only Cheeger set and
\[
|B_{R}|h(B_{R})^{N}=\omega_{N}R^{N}\left( \frac{N}{R}\right) ^{N}=\omega
_{N}N=|B_{1}|h(B_{1}).
\]
}
\end{remark}

\begin{remark}
\textrm{Taking into account Theorem \ref{linfty}, the $L^{\infty}%
$-normalization of $\phi_{p}$ also produces, when $p\rightarrow1^{+}$, a
function $u\in BV(\Omega)\cap L^{\infty}(\Omega)$ such that $\|u\|_{\infty
}\leq1$,
\[
\omega_{N}\left( \frac{N}{h(\Omega)}\right) ^{N}\leq\|u\|_{1}\leq|\Omega|
\]
and whose $t$-level sets are Cheeger sets almost all $0\leq t\leq1$. }

\textrm{Moreover, $u$ satisfies
\[
h(\Omega)=\frac{H(u)}{\int_{\Omega}u\,dx}\leq\frac{H(v)}{\int_{\Omega}%
v\,dx}\text{ for all }v\in BV(\Omega)\text{ satisfying }0\leq v\leq1\text{
\ in \ }\overline{\Omega}.
\]
}
\end{remark}

\end{document}